\newtheorem{thm}{Theorem}[section]
\newtheorem{cor}[thm]{Corollary}
\newtheorem{lem}[thm]{Lemma}
\newtheorem{pro}[thm]{Proposition}
\theoremstyle{definition}
\newtheorem{defn}[thm]{Definition}
\newtheorem{exa}[thm]{Example}
\newcommand{\cJ}{\mathcal{J}}
\newcommand{\cM}{\mathcal{M}}
\newcommand{\cR}{\mathcal{R}}
\newcommand{\cL}{\mathcal{L}}
\newcommand{\cI}{\mathcal{I}}
\author{Casey Donoven and Luise-Charlotte Kappe}
\title{Finite Coverings of Semigroups and Related Structures}
\begin{document}
\maketitle




\begin{abstract}
For a semigroup $S$, the covering number of $S$ with respect to semigroups, $\sigma_s(S)$, is the minimum number of proper subsemigroups of $S$ whose union is $S$.  This article investigates covering numbers of semigroups and analogously defined covering numbers of inverse semigroups and monoids.  Our three main theorems give a complete description of the covering number of finite semigroups, finite inverse semigroups, and monoids (modulo groups and infinite semigroups).  For a finite semigroup that is neither monogenic nor a group, its covering number is two. For all $n\geq 2$, there exists an inverse semigroup with covering number $n$, similar to the case of loops.  Finally, a monoid that is neither a group nor a semigroup with an identity adjoined has covering number two as well.
\end{abstract}




\section{Introduction}\label{sec:introduction}
The investigations in this paper were motivated be certain results on finite coverings of groups, loops, and rings.  We say a group has a finite covering by subgroups if it is the set-theoretic union of finitely many proper subgroups.  Similarly, an algebraic structure, say a ring, a loop, or a semigroup, has a finite covering by its algebraic substructures if it is the set-theoretic union of finitely many of its proper substructures. A minimal covering for a group $G$ is a covering which has minimal cardinality amongst all the coverings of $G$.  The size of the minimal covering of a group is denoted by $\sigma(G)$.  If a group has no finite covering, we say its covering number is infinite, i.e.~$\sigma(G)=\infty$.  This group invariant was introduced in a 1994 paper by J.~H.~E.~Cohn \cite{Cohn94}, spurring a lot of research activity in this area.  However, the earliest investigations on this topic can be traced back to a 1926 paper by Scorza \cite{Scorza26}, where he proved that $\sigma(G)=3$ if and only if the Klein 4-group is a homomorphic image of $G$.

It is an easy exercise to show that no loop is the union of two proper subloops.  A simple consequence of this is that no group is the union of two proper subgroups and no ring is the union of two proper subrings.  However, it was shown by S.~Gagola III and the second author \cite{GagolaKappe16} that for every integer $n>2$, there exists a loop with covering number $n$.

The situation for groups is different.  Cohn in \cite{Cohn94} constructed a solvable group with covering number $p^\alpha+1$ for every prime $p$ and $\alpha>0$ and conjectured that every finite solvable group has a covering number of the form $p^\alpha+1$.  This was shown by Tomkinson in \cite{Tomkinson97}.  He also showed that there is no group with covering number 7 and conjectured that there are no groups with covering number 11, 13, or 15.  However, this is only true for $n=11$.  For details, see \cite{GaronziKappeSwartz18}, where it is described whether $n$ is a covering number of a group or not, for all $n$ satisfying $2\leq n\leq 129$, extending previous results from 26 to 129.

Much less is known about covering numbers of rings, but the results are similar to those concerning groups. In \cite{Lucchini12}, Lucchini and Maroti classify rings which can be covered by three proper subgroups and, in \cite{Werner15}, Werner determines the covering number of various rings which are direct sums of fields.  So far, it has not been explicitly verified if any integer $n>2$ is not a covering number of a ring.  The smallest candidate for such a number is $n=13$.

For semigroups, the topic of our investigations, the situation is completely different, as can be seen from the following example.  Consider the integers, which form a semigroup under multiplication.  Obviously, they are the union of two subsemigroups, namely the odd and even inetgers.  Semigroups having a finite covering number other than two, which are not groups, are currently being investigated in \cite{DonovenEppolito}.  It is shown that for every $n$ that is a covering number of a group with respect to groups, there exists an infinite semigroup, that is not a group, with covering number $n$ with respect to semigroups.

As we will show in our first theorem (Theorem~\ref{thm:finitesemigroups}), every finite semigroup, which is not a group or generated by a single element, has covering number two.  The following statistical evidence further illustrates the situation.  There are 1,843,120,128 non-equivalent semigroups of order eight (up to isomorphism and anti-isomorphism) \cite{SatohYamaTokizawa94}, but only 12 have covering number not equal to two.  Of the remaining 12, eight are generated by a single element and the last four are groups with semigroup covering number equal to three.

To make our notation more precise, we have to make some formal definitions.

\begin{defn}\label{defn:algebraicstructures}
\begin{enumerate}[(i)]
    \item A \emph{semigroup} is a nonempty set $S$ with an associative binary operation.
    \item A \emph{monoid} $M$ is a semigroup with an identity, i.e.~an element\linebreak $1\in M$ such that $1\cdot m=m=m\cdot 1$ for all $m\in M$.
    \item An \emph{inverse semigroup} $I$ is a semigroup such that for every element $a\in I$, there exists a unique element $a^{-1}\in I$ where $aa^{-1}a=a$ and $a^{-1}aa^{-1}=a^{-1}$.
    \item A \emph{group} $G$ is a monoid such that for every $g\in G$, there exists a unique element $g^{-1}\in G$ where $gg^{-1}=1=g^{-1}g$.
\end{enumerate}
\end{defn}

In addition to coverings of semigroups by proper subsemigroups, we also consider coverings by specific subsemigroups, such as semigroups which are groups, inverse semigroups, or monoids.  Here, we give the formal definitions of these algebraic structures and their respective covering numbers.

\begin{defn}
Let $U$ be a subsemigroup of a semigroup.
\begin{enumerate}[(i)]
    \item We say $U$ is an inverse subsemigroup of a semigroup $S$ if $U$ is an inverse semigroup.
    \item We say $U$ is a submonoid of a monoid $M$ if $U$ contains the identity of $M$.
    \item We say $U$ is a monoidal subsemigroup of a semigroup $S$ if $U$ is a monoid (but could possibly not contain the identity of the semigroup $S$, in case $S$ is a monoid).
\end{enumerate}
\end{defn}

\begin{defn}
For an algebraic structure $A$, as given in Definition~\ref{defn:algebraicstructures}, we define the following covering numbers:
\begin{enumerate}[(i)]
    \item the covering number with respect to subgroups, $\sigma_g(A)$;
    \item with respect to subsemigroups, $\sigma_s(A)$;
    \item with respect to inverse subsemigroups, $\sigma_i(A)$;
    \item with respect to submonoids, $\sigma_m(A)$;
    \item with respect to monoidal subsemigroups, $\sigma_m^*(A)$.
\end{enumerate}
\end{defn}

We are ready to state our three main results characterizing the covering numbers of finite semigroups, finite inverse semigroups, and (not necessarily finite) monoids.  The proofs are given in Section~\ref{sec:semigroups}, \ref{sec:inversesemigroups}, and \ref{sec:monoids}, respectively.

\begin{thm}\label{thm:finitesemigroups}
Let $S$ be a finite semigroup.
\begin{enumerate}[(i)]
\item If $S$ is monogenic (generated by a single element), then $\sigma_s(S)=\infty$.
\item If $S$ is a group, then $\sigma_s(S)=\sigma_g(S)$.
\item If $S$ is neither monogenic nor a group, then $\sigma_s(S)=2$.
\end{enumerate}
\end{thm}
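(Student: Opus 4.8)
The inequality $\sigma_s(S)\ge 2$ is immediate, since a semigroup is never the union of a single proper subsemigroup; the content of part (iii) is therefore to exhibit two proper subsemigroups whose union is $S$. I would split according to whether $S=S^2$.

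If $S\ne S^2$, fix an indecomposable element $x\in S\setminus S^2$. Since $x$ is not a product of any two elements of $S$, the set $S\setminus\{x\}$ is closed under multiplication, hence a subsemigroup; it is nonempty because $S$ is not monogenic (so $|S|\ge 2$), and it is proper. Again because $S$ is not monogenic, $\langle x\rangle$ is a proper subsemigroup, and $\langle x\rangle\cup(S\setminus\{x\})=S$; this settles this case.

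Now assume $S=S^2$ and work with Green's relations on the finite semigroup $S$. Suppose first that $S$ has two distinct maximal $\cR$-classes $A$ and $A'$. The complement of any maximal $\cR$-class $C$ is a right ideal: if $y\notin C$ but $ys\in C$ for some $s$, then the $\cR$-class of $y$ lies $\cR$-above $C$, hence equals $C$ by maximality, a contradiction. Thus $S\setminus A$ and $S\setminus A'$ are proper subsemigroups, and they cover $S$ since $A\cap A'=\emptyset$; the left--right dual disposes of the case of two distinct maximal $\cL$-classes. It remains to treat the case where $S$ has a unique maximal $\cR$-class $A_0$ and a unique maximal $\cL$-class $B_0$. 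Using $\cD=\cJ$ for finite semigroups and the fact that a maximal $\cD$-class is a union of maximal $\cR$-classes (and of maximal $\cL$-classes), one gets that $S$ has a unique maximal $\cD$-class, equal to both $A_0$ and $B_0$; being simultaneously a single $\cR$-class and a single $\cL$-class, $D:=A_0=B_0$ is a single $\cH$-class. Now invoke $S=S^2$: each $a\in D$ factors as $a=uv$; since $a\in uS^1$ the $\cR$-class of $u$ lies $\cR$-above $D$, hence equals $D$, and dually $v\in D$, so $D\subseteq DD$. By Green's theorem an $\cH$-class meeting its own square is a subgroup of $S$, so $D$ is a subgroup. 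If $D=S$, then $S$ would be a single $\cH$-class containing an idempotent, hence a group, contrary to hypothesis; therefore $D$ is a proper subsemigroup, and, exactly as above, so is the right ideal $S\setminus D$. Their union is $S$, so $\sigma_s(S)=2$.

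The routine parts are the closures of $S\setminus\{x\}$ (for $x$ indecomposable) and of the complement of a maximal $\cR$-class. I expect the main obstacle to be the last case, $S=S^2$ with a unique maximal $\cR$-class and a unique maximal $\cL$-class: one must recognize that the ``top'' of $S$ collapses to a single $\cH$-class and that $S=S^2$ then forces this $\cH$-class to be a subgroup — precisely the step where finite-semigroup structure theory, together with the exclusion of groups, is genuinely used.
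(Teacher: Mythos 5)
Your argument for part (iii) is correct, but it takes a genuinely different route from the paper's. The paper reduces everything to a maximal $\cJ$-class $J$: if $\langle J\rangle\neq S$ it covers $S$ by $S-J$ and $\langle J\rangle$ (Corollary~\ref{cor:semigroupwithnogenJ}); if $J=S$ it invokes Rees's Theorem and splits a Rees matrix semigroup along its first coordinate (Lemma~\ref{lem:reesmatrix}); and if $\langle J\rangle=S\neq J$ it passes to the principal factor $J^*$, which is either a regular Rees $0$-matrix semigroup (pull back a two-part cover) or null (which forces $S$ to be monogenic). You instead split on whether $S=S^2$: when $S\neq S^2$ you peel off an indecomposable element, and when $S=S^2$ you argue with maximal $\cR$- and $\cL$-classes, either covering $S$ by the complements of two incomparable maximal classes or showing that the top of $S$ collapses to a single $\cH$-class that Green's theorem turns into a group, which the non-group hypothesis then makes proper. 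This avoids Rees's Theorem and principal factors entirely, which is an attractive economy. Two caveats. First, the assertion that a maximal $\cD$-class is a union of maximal $\cR$-classes (and of maximal $\cL$-classes) is exactly the stability property of finite semigroups ($x\,\cJ\,xy$ implies $x\,\cR\,xy$, and dually); it is true and standard, but it is not among the paper's quoted preliminaries and is the one place where you need a genuine input beyond Lemma~\ref{lem:Jpartialorder}, so it should be proved or cited explicitly, as should Green's theorem that an $\cH$-class meeting its own square is a subgroup. Second, you prove only part (iii): part (i) needs the one-line observation that every proper subsemigroup omits the generator, and part (ii) needs the content of Lemma~\ref{lem:torsion}, namely that every subsemigroup of a finite group is a subgroup; neither is difficult, but both belong to the statement.
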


In general, an arbitrary semigroup $S$ may not be the union of finitely many subgroups.  However, semigroups which are the union of groups have been well studied (see \cite{UnionsOfGroups}).  We present an interesting example.  

\begin{exa}
Let $S=\{a,b,0\}$ with $a\cdot a=a$, $b\cdot b=b$, and all other products equal 0.  We see $\sigma_g(S)=3$, as $\{a\}$, $\{b\}$, and $\{0\}$ are maximal subgroups of $S$.  However, $\sigma_s(S)=2$ as $S=\{a,0\}\cup\{b,0\}$.
\end{exa}

Generalizing this construction can produce semigroups with arbitrary covering numbers with respect to groups.  On the other hand, infinite groups which have no finite coverings by subgroups can have finite coverings by semigroups.

\begin{exa}
Let $\mathbb{Z}$ be the group of integers under addition.  Then $\sigma_g(\mathbb{Z})=\infty$, since $\mathbb{Z}$ is monogenic (with respect to group operations).  However, $\sigma_s(\mathbb{Z})=2$, since $\mathbb{Z}$ is the union of the positive integers and the non-positive integers.
\end{exa}

In our next theorem, we give a characterization of covering numbers of finite inverse semigroups.  Green's relations and the principal factor $J^*$ of an equivalence class $J$ are used in the statement explicitly.  For the details, we refer to Definition~\ref{defn:greensrelations} and Definition~\ref{defn:reesmatrixsemigroups} in the next section.

\begin{thm}\label{thm:inversesemigroups}
Let $I$ be a finite inverse semigroup.
\begin{enumerate}[(i)]
\item If $I$ is a group, then $\sigma_i(I)=\sigma_g(I)$.
\item If $I$ is not generated by a single $\cJ$-class, then $\sigma_i(I)=2$.
\item If $I$ is not a group but is generated by a single $\cJ$-class $J$, then $J^*$ is isomorphic to a Rees 0-matrix semigroup $\cM^0[K,G,K;P]$, where $|K|\geq 2$.  
\begin{enumerate}
    \item If $|K|=2$ and $|G|=1$, then $\sigma_i(I)=\infty$.
    \item If $|K|=2$ and $|G|>1$, then $\sigma_i(I)=n+1$ where $n$ is the minimum index of proper subgroups of $G$.
    \item If $|K|>2$, then $\sigma_i(I)=3$.
\end{enumerate}
\end{enumerate}
\end{thm}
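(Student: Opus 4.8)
The plan is to prove the four assertions in order of increasing difficulty, using the structure theory of finite inverse semigroups (principal factors are Brandt semigroups).

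For (i), I would observe that a finite subsemigroup of a group is a subgroup, and conversely every subgroup is an inverse subsemigroup (the group inverse is the unique semigroup inverse), so proper inverse subsemigroups of $I$ are exactly proper subgroups and $\sigma_i(I)=\sigma_g(I)$. For (ii), choose a $\cJ$-class $J$ that is maximal in the $\cJ$-order. Since $a$ and $a^{-1}$ are always $\cJ$-related, $I\setminus J$ is inverse-closed, and by maximality of $J$ no product of two elements outside $J$ can lie in $J$; hence $I\setminus J$ is itself a proper inverse subsemigroup. On the other hand $\langle J\rangle$ is proper because $I$ is not generated by a single $\cJ$-class. As $I=\langle J\rangle\cup(I\setminus J)$ and trivially $\sigma_i(I)\geq 2$, this gives $\sigma_i(I)=2$.

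Now assume the hypotheses of (iii). Every element of $I$ is a product of elements of $J$, hence $\cJ$-below $J$, so $J$ is the top $\cJ$-class and $N:=I\setminus J$ is an ideal; since a finite inverse semigroup with a single $\cJ$-class is a group, $N\neq\emptyset$, and the Rees quotient $I/N$ is the principal factor $J^{*}$, which (as $I$ is inverse) is a Brandt semigroup $\cM^{0}[K,G,K;P]$ with $P$ the identity matrix. If $|K|=1$ then $J$ is a single $\cH$-class forming a subgroup generating $I$, forcing $I$ to be a group; so $|K|\geq 2$. Write $E_{ij}(g)$ for the element of $J$ corresponding to $(i,g,j)$, so $E_{ij}(g)E_{kl}(h)=E_{il}(gh)$ if $j=k$ and lies in $N$ otherwise. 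For (a) ($|K|=2$, $|G|=1$): a non-idempotent $E_{12}(1)$ already satisfies $\langle E_{12}(1)\rangle\supseteq\{E_{11}(1),E_{12}(1),E_{21}(1),E_{22}(1)\}=J$, so it generates $I$; no proper inverse subsemigroup contains it, and these elements cannot be covered at all, giving $\sigma_i(I)=\infty$. For (b) ($|K|=2$, $|G|>1$), fix $H\leq G$ of minimum index $n$. For $t$ in a right transversal of $H$ put $W_t:=\{E_{11}(h),E_{22}(t^{-1}ht),E_{12}(ht),E_{21}(t^{-1}h):h\in H\}\cup N$; a direct check shows each $W_t$ is a proper inverse subsemigroup, and together with $V_0:=\{E_{11}(g),E_{22}(g):g\in G\}\cup N$ they cover $I$ (the $W_t$ exhaust the off-diagonal $\cH$-classes because the cosets $Ht$ and $t^{-1}H$ exhaust $G$; $V_0$ covers the two diagonal $\cH$-classes and $N$), so $\sigma_i(I)\leq n+1$. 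For the lower bound, if $I=V_1\cup\dots\cup V_m$ with each $V_k$ proper, then $A_k:=\{g:E_{12}(g)\in V_k\}$ is, when nonempty, a single coset of the subgroup $B_k:=\{g:E_{11}(g)\in V_k\}$, and $B_k$ must be proper (else $V_k\supseteq J$ and $V_k=I$). Since $\bigcup A_k=G$, an index count shows at least $n$ of the $V_k$ meet the $E_{12}$-row; and since the corresponding $B_k$ are proper subgroups all containing the identity and $G$ is never a union of $n$ proper subgroups of index $\geq n$, a short computation forces either a further $V_k$ or $m>n$ directly, so $m\geq n+1$; hence $\sigma_i(I)=n+1$.

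For (c) ($|K|\geq 3$) the upper bound is quick: pick distinct $\kappa_0,\kappa_1,\kappa_2\in K$ and set $V_\ell:=\{E_{ij}(g):i,j\in K\setminus\{\kappa_\ell\},g\in G\}\cup N$; each is a proper inverse subsemigroup, and since an index equals at most one $\kappa_\ell$, every $E_{ij}(g)$ lies in some $V_\ell$, so $\sigma_i(I)\leq 3$. The hard part, and the main obstacle, is $\sigma_i(I)\geq 3$. Here the plan is to reduce to $J^{*}=\cM^{0}[K,G,K;P]$ via the Rees quotient $I\to I/N$ (the image of a proper inverse subsemigroup of $I$ is proper in $J^{*}$, because $J$ generates $I$), and then to classify the inverse subsemigroups of a Brandt semigroup: up to isomorphism each is a disjoint union, glued at $0$, of Brandt semigroups $\cM^{0}[K_t,H_t,K_t;P]$ indexed by a partition $\{K_t\}$ of a subset $S\subseteq K$, with each $H_t$ a subgroup of $G$ up to conjugacy. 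Given $J^{*}=V\cup W$ with $V,W$ proper, the generic case is dispatched by the elementary ``two-partition'' lemma: two partitions of a set with at least three elements, each having at least two blocks, cannot place every pair in a common block of one of the two; the degenerate case in which one of $V,W$ is a single Brandt block on all of $K$ over a proper subgroup is excluded by a counting argument (a subgroup of $G$ cannot contain the complement of a proper subgroup, and $G$ is not a union of two proper subgroups). This contradiction yields $\sigma_i(I)=3$. I expect most of the routine labour to be in the verification that the subsemigroups $W_t$ in (b) are closed and in the bookkeeping behind the classification of inverse subsemigroups of Brandt semigroups used in (c).
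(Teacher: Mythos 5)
Your proof is correct, and for parts (i), (ii), (iii)(a), (iii)(b), and the upper bound in (iii)(c) it follows essentially the same route as the paper: reduce to the principal factor $J^*$ via the Rees quotient (images and preimages of proper inverse subsemigroups stay proper because $J$ generates $I$), identify $J^*$ as a Brandt semigroup $\cM^0[K,G,K;P]$ with identity matrix and $|K|\geq 2$, use the same coset-built subsemigroups $W_t$ together with the diagonal subsemigroup for the $|K|=2$ upper bound, and the same coset/index count for the lower bound (each nonempty $A_k$ is a right coset of the proper subgroup $B_k$, so at least $n$ covering sets meet the off-diagonal $\cH$-class, and if exactly $n$ then $G=\bigcup B_k$ with $|B_k|\leq |G|/n$, which fails since $\bigl|\bigcup B_k\bigr|\leq 1+n(|G|/n-1)<|G|$ --- this is the "short computation" you defer, and it is exactly the paper's argument). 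The genuine divergence is the lower bound $\sigma_i\neq 2$ for $|K|\geq 3$. The paper proves the stronger statement that for \emph{every} proper inverse subsemigroup $H$ of $\cM^0[K,G,K;P]$ with $|K|\geq 2$ the complement generates the whole semigroup, via a three-way case analysis on which nonzero idempotents $H$ contains and how they are $\cJ^H$-related. You instead classify the inverse subsemigroups of a Brandt semigroup as unions, glued at $0$, of Brandt blocks over a partition of a subset of $K$ with conjugate subgroups of $G$, and then derive the contradiction from an elementary two-partition lemma (two partitions of a set of size at least three, each with at least two blocks, cannot jointly realize every pair) together with, in the degenerate single-block case, the fact that a group is never the union of two proper subgroups. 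Both arguments are sound: the paper's yields the stronger ``complement generates'' property uniformly in $|K|$, while yours buys a reusable structural description of inverse subsemigroups of Brandt semigroups and isolates the purely combinatorial core of the $|K|\geq 3$ case; the remaining compressed steps (closure of $W_t$, exclusion of the null principal factor, $|K|=1$ forcing a group) are routine and match the paper.
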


In Proposition~\ref{pro:minimumindexsubgroup}, we show that the value of $n$ in the previous theorem can be any integer greater one except four.  Thus, we obtain the following corollary.

\begin{cor}\label{cor:allinversecoverings}
Let $n\geq 2$.  Then there exists an inverse semigroup $I$ such that $\sigma_i(I)=n$.  
\end{cor}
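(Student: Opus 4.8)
\emph{Proof proposal.} The plan is to realize every integer $n\ge 2$ as $\sigma_i$ of a suitable finite inverse semigroup by invoking the three cases of Theorem~\ref{thm:inversesemigroups}, with Proposition~\ref{pro:minimumindexsubgroup} supplying the groups needed for the Rees $0$-matrix construction. Three families of examples will do the job: semilattices for $n=2$, Brandt semigroups for $n\ge 3$ with $n\neq 5$, and one fixed group for the exceptional value $n=5$.

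For $n=2$, take the two-element semilattice $S=\{e,f\}$ with $ef=fe=f$. It is a commutative inverse semigroup, its two $\cJ$-classes are the singletons $\{e\}$ and $\{f\}$, and neither of these generates $S$, so $S$ is not generated by a single $\cJ$-class. Part (ii) of Theorem~\ref{thm:inversesemigroups} then gives $\sigma_i(S)=2$.

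For $n\ge 3$ with $n\neq 5$, use part (iii)(b). Since $n-1\ge 2$ and $n-1\neq 4$, Proposition~\ref{pro:minimumindexsubgroup} provides a finite (necessarily nontrivial) group $G$ whose proper subgroups have minimum index $n-1$. Set $I=\cM^0[\{1,2\},G,\{1,2\};P]$ with $P$ the $2\times 2$ identity matrix, i.e.\ the Brandt semigroup on two indices over $G$. Then $I$ is a finite inverse semigroup; it is not a group, since it has distinct idempotents $0$ and $(1,1_G,1)$; and its unique nonzero $\cJ$-class $J$ generates $I$, because products of nonzero elements may equal $0$, with $J^*\cong I=\cM^0[\{1,2\},G,\{1,2\};P]$. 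As $|K|=2$ and $|G|>1$, part (iii)(b) of Theorem~\ref{thm:inversesemigroups} yields $\sigma_i(I)=(n-1)+1=n$.

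Finally, $n=5$ needs separate treatment, and this is the only delicate point: it cannot be produced by the Brandt construction, since that would require a group whose proper subgroups have minimum index exactly $4$, which Proposition~\ref{pro:minimumindexsubgroup} forbids, and parts (ii) and (iii)(c) only yield the values $2$ and $3$. So one must use part (i) with a finite group $G$ satisfying $\sigma_g(G)=5$; I would take $G=A_4$, for which it is well known that $\sigma_g(A_4)=5$, a minimal covering being the four Sylow $3$-subgroups together with the Klein four-subgroup. Since $A_4$ is in particular a finite inverse semigroup, part (i) of Theorem~\ref{thm:inversesemigroups} gives $\sigma_i(A_4)=\sigma_g(A_4)=5$. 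Taken together, the three families of examples cover every $n\ge 2$, which is what the corollary asserts.
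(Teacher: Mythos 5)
Your proposal is correct and takes essentially the same approach as the paper: exhibit an explicit example in each case of Theorem~\ref{thm:inversesemigroups}, using Proposition~\ref{pro:minimumindexsubgroup} to supply groups of prescribed minimal subgroup index for the Rees $0$-matrix (Brandt) construction $\cM^0[\{1,2\},G,\{1,2\};P]$, and handling the exceptional value $n=5$ via case (i) with the group $A_4$, exactly as the paper does. The only differences are immaterial choices of witnesses --- the paper uses the symmetric inverse monoid for $n=2$, its singular part for $n=3$, and $S_3$ for $n=4$, where you use a two-element semilattice and Brandt semigroups over $C_2$ and $C_3$ --- and your explicit remark that $n=5$ cannot arise from case (iii)(b) because $4$ is never a minimal subgroup index is precisely the point underlying the paper's treatment.
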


We prove Corollary~\ref{cor:allinversecoverings} in Section~\ref{sec:inversesemigroups} by giving examples belonging to each case of Theorem~\ref{thm:inversesemigroups}.  

Lastly, for monoids, we are able to drop the finiteness criterion.  However, this characterization is dependent on the covering number of semigroups with respect to semigroups, which is only known for finite semigroups.

\begin{thm}\label{thm:monoids}
Let $M$ be a monoid.
\begin{enumerate}[(i)]
\item If $M$ is a group, then $\sigma_m(M)=\sigma_m^*(M)=\sigma_s(M)$.
\item If $S=M-\{1\}$ is a non-empty semigroup, then $\sigma_m^*(M)\leq\sigma_m(M)=\sigma_s(S)$ and $\sigma_s(M)=2$.
\item If $M$ is neither a group nor is $M-\{1\}$ a non-empty semigroup, then\\ $\sigma_m(M)=\sigma_m^*(M)=\sigma_s(M)=2$
\end{enumerate}
\end{thm}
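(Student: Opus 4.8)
The plan is to dispatch the three cases of Theorem~\ref{thm:monoids} in turn, using throughout the elementary chain $\sigma_s(M)\le\sigma_m^*(M)\le\sigma_m(M)$ --- valid because every submonoid is a monoidal subsemigroup and every monoidal subsemigroup is a subsemigroup, and a proper instance of a stronger notion is a proper instance of the weaker one --- together with the trivial fact that each of these invariants is $\ge 2$ as soon as $M$ has a proper subsemigroup, since one proper subsemigroup cannot exhaust $M$. (If $|M|=1$ then $M$ is the trivial group, all covering numbers are $\infty$, and $M$ falls under (i).) For case (i), with $M$ a group, the only nontrivial point is $\sigma_m(M)\le\sigma_s(M)$: given a covering $M=U_1\cup\cdots\cup U_n$ by proper subsemigroups, replace each $U_i$ by the submonoid $U_i\cup\{1\}$; this is still proper, because $U_i\cup\{1\}=M$ would force $U_i\supseteq M\setminus\{1\}$, and $M\setminus\{1\}$ is never a subsemigroup of a nontrivial group (as $gg^{-1}=1$ for $g\ne 1$), so $U_i$ would be all of $M$. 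These sets still cover $M$, giving $\sigma_m(M)\le\sigma_s(M)$, and the generic chain forces all three to coincide.

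For case (ii), where $S=M\setminus\{1\}$ is a non-empty subsemigroup, the covering $M=\{1\}\cup S$ by two proper subsemigroups gives $\sigma_s(M)=2$. For the identity $\sigma_m(M)=\sigma_s(S)$, observe that the proper submonoids of $M$ are exactly the sets $\{1\}\cup T$ where $T\subseteq S$ is a (possibly empty) subsemigroup with $T\subsetneq S$: if $V$ is a submonoid then $V\cap S=V\setminus\{1\}$ is an intersection of two subsemigroups of $M$, hence a subsemigroup of $S$, and conversely $\{1\}\cup T$ is always a submonoid. A covering of $M$ by proper submonoids therefore restricts (after discarding pieces equal to $\{1\}$) to a covering of $S$ by proper subsemigroups of no larger cardinality, and a covering of $S$ lifts by adjoining $1$ to each piece; hence $\sigma_m(M)=\sigma_s(S)$, including the case where both sides are $\infty$. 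The inequality $\sigma_m^*(M)\le\sigma_m(M)$ is generic.

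For case (iii), $M$ is not a group and $S=M\setminus\{1\}$ is non-empty but not closed, so there are $a,b\in M\setminus\{1\}$ with $ab=1$. First, $M$ is not monogenic as a semigroup: if $M=\langle c\rangle$ then $c^k=1$ for some $k\ge1$, making $c$ a unit and $M$ a cyclic group, a contradiction. Since $2\le\sigma_s(M)\le\sigma_m^*(M)\le\sigma_m(M)$, it suffices to cover $M$ by two proper submonoids. Let $G=U(M)\subsetneq M$ be the group of units. If $M\setminus G$ is closed, then $G\ne\{1\}$ --- otherwise $M\setminus G=S$ would be closed, which is case (ii) --- and $M=G\cup\bigl(\{1\}\cup(M\setminus G)\bigr)$ is a covering by two proper submonoids. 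If $M\setminus G$ is not closed, pick non-units $p,q$ with $pq=u\in G$ and set $x=p$, $z=qu^{-1}$, so $xz=1$ with $x,z$ non-units; then $Mx$ and $zM$ are proper subsemigroups (neither contains $1$, as $x$ has no left inverse and $z$ no right inverse), and the idempotent $e=zx\ne1$ satisfies $eM=zM$ and $Me=Mx$. One then enlarges these one-sided ideals by the identity, and where necessary by units, to obtain two proper submonoids whose union is $M$.

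The main obstacle is exactly this last subcase, in full generality and without finiteness. For the bicyclic monoid $\langle a,b\mid ab=1\rangle$ the sets $\{1\}\cup bM$ and $\{1\}\cup Ma$ already work, essentially because the idempotent $ba$ acts as a one-sided identity on every non-identity element; but this can fail --- for example in $B\times C_2$ the nontrivial unit is fixed by the idempotent on neither side, so it lies in neither ideal --- which is why the ideals must be augmented by the unit group. The delicate verifications are then that the augmented pieces remain closed (i.e.\ that units interact with the ideals $zM$ and $Mx$ so that, say, $U(M)z\subseteq zM$) and that, together with $1$, the two (augmented) ideals cover all of $M$. Carrying this through, by a careful analysis of how $M$ decomposes relative to its group of units, is where the substance of Theorem~\ref{thm:monoids}(iii) lies; once $\sigma_m(M)=2$ is established, $\sigma_m^*(M)=\sigma_s(M)=2$ follow immediately from the generic chain.
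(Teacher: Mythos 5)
Your treatment of cases (i) and (ii) is correct and essentially identical to the paper's. The problem is case (iii), and you flag it yourself: in the subcase where the complement of the unit group $U(M)$ is not closed, you only sketch a construction with the one-sided ideals $Mx$ and $zM$ ``augmented by the identity, and where necessary by units,'' and you never verify that the augmented sets are subsemigroups, are proper, or cover $M$. These verifications are not routine --- for a unit $u$ there is no reason in general that $uz\in zM$, which is exactly the closure condition your augmentation needs --- so as written the proof of (iii) is incomplete precisely at the point you call ``the substance'' of the statement.

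The paper closes this case with a different and much lighter decomposition: instead of the group of units it uses $R_1$, the $\cR$-class of the identity, i.e.\ the set of elements possessing a right inverse. Two one-line computations show that $R_1$ is always a submonoid (if $fh_f=1$ and $gh_g=1$ then $fgh_gh_f=1$) and that $M-R_1$ is always a subsemigroup (if $fg$ had a right inverse then so would $f$). In case (iii) there exist $a,b\neq 1$ with $ab=1$, so $a\in R_1-\{1\}$ and hence $R_1\neq\{1\}$; and since $M$ is not a group, $M-R_1\neq\emptyset$ (if every element had a right inverse, the computation $f=fgh=h$ from your case (i) would make $M$ a group). Therefore $R_1$ and $(M-R_1)\cup\{1\}$ are two proper submonoids whose union is $M$, giving $\sigma_m(M)=2$ directly, and $\sigma_m^*(M)=\sigma_s(M)=2$ then follow from your chain. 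Note that $R_1$ strictly contains $U(M)$ exactly in the subcase that defeats you (a non-unit $p$ with $pq$ a unit is right invertible but not a unit), which is why passing from units to right-invertible elements dissolves the difficulty; replacing your unit-group dichotomy by this single observation would complete the proof.
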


The second case in Theorem~\ref{thm:monoids} is the only case in which covering numbers with respect to submonoids and monoidal semigroups can differ.  We give a complete characterization of when $\sigma_m^*(M)$ is strictly less than $\sigma_m(M)$ in Section~\ref{sec:monoids}.

In our last section, Section~\ref{sec:questions}, we present some open questions concerning covering numbers of semigroups.


\section{Preliminaries}\label{sec:preliminaries}
In this section, we present various concepts and theorems needed to establish our results on covering numbers of semigroups, such as Green's relations, Rees matrix semigroups, and Rees's Theorem.  All of these definitions and results can be found in Howie's 1995 monograph \cite{howie95fundamentals}.  We will give explicit references to \cite{howie95fundamentals} but recommend this book as an excellent source for proofs and further detail.

First, we define Green's relations.  Note that we define $S^1$ as the semigroup $S$ with an identity adjoined if $S$ does not have an identity, and merely $S$ otherwise.

\begin{defn}[\cite{howie95fundamentals}; 2.1]\label{defn:greensrelations}
Let $S$ be a semigroup and $x,y\in S$.  Then 
\begin{enumerate}[(i)]
    \item $x\cJ y$ if and only if $S^1xS^1=S^1yS^1$;
    \item $x\cR y$ if and only if $xS^1=yS^1$;
    \item $x\cL y$ if and only if $S^1x=S^1y$.    
\end{enumerate}
\end{defn}

It can be easily seen that $\cJ$, $\cR$, and $\cL$ are equivalence relations.  A useful equivalent definition is given in the following proposition.

\begin{pro}[\cite{howie95fundamentals},2.1.1]
Let $x,y$ be elements in a semigroup $S$.  Then $x\cJ y$ if and only if there exist $a,b,c,d\in S^1$ such that $axb=y$ and $cyd=x$.  Similarly, $x\cR y$ if and only if there exist $a,b\in S$ such that $xa=y$ and $yb=x$, as well as $x\cL y$ if and only if there exist $c,d\in S$ such that $cx=b$ and $dy=x$.
\end{pro}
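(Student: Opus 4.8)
The proof is a direct unfolding of Definition~\ref{defn:greensrelations}. The plan is to handle the three relations in turn --- $\cR$ first, then $\cL$ as its left/right dual, and finally $\cJ$ by the same manipulation applied to two-sided principal ideals --- the whole argument being formal set-manipulation with principal ideals plus a little bookkeeping about the adjoined identity.

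For $\cR$: if $x\cR y$, then $xS^1=yS^1$; since $y=y\cdot 1\in yS^1=xS^1$, there is $a\in S^1$ with $xa=y$, and symmetrically $b\in S^1$ with $yb=x$. Conversely, from $xa=y$ and $yb=x$ one gets $yS^1=xaS^1\subseteq xS^1$ and $xS^1=ybS^1\subseteq yS^1$, hence $xS^1=yS^1$, i.e.\ $x\cR y$. The only delicate point is that the proposition asks for $a,b$ in $S$, whereas the witnesses just produced lie a priori only in $S^1$; a witness can fail to be in $S$ only if it equals the adjoined identity, which forces $x=y$. So I would treat $x=y$ as a separate (trivial) case, and when $x\ne y$ observe that $xa=y\ne x=x\cdot 1$ already rules out $a=1$, so $a\in S$, and similarly $b\in S$.

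The $\cL$ case is the exact left/right dual: run the same argument on the left principal ideals $S^1x$ and $S^1y$ (equivalently, pass to the opposite semigroup), obtaining $x\cL y$ iff there are $c,d\in S$ with $cx=y$ and $dy=x$. For $\cJ$: if $x\cJ y$ then $S^1xS^1=S^1yS^1$, and $y=1\cdot y\cdot 1\in S^1yS^1=S^1xS^1$ gives $a,b\in S^1$ with $axb=y$, and symmetrically $c,d\in S^1$ with $cyd=x$; conversely $axb=y$ and $cyd=x$ yield $S^1yS^1=S^1(axb)S^1\subseteq S^1xS^1$ and the reverse inclusion identically, so $S^1xS^1=S^1yS^1$. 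Here the statement already allows the multipliers to lie in $S^1$, so no such subtlety arises. I expect the $S$-versus-$S^1$ bookkeeping in the $\cR$ and $\cL$ parts to be the only point needing any thought at all; everything else is routine.
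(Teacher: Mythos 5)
The paper itself gives no proof of this proposition --- it is quoted verbatim as Proposition~2.1.1 of Howie's monograph --- so the only thing to compare your argument against is the standard one, which is exactly what you give: unfold the definitions via membership in the principal two-sided, right, and left ideals, using $y=1\cdot y\cdot 1\in S^1yS^1$ (resp.\ $y=y\cdot 1\in yS^1$, $y=1\cdot y\in S^1y$) for the forward directions, and the inclusions $S^1(axb)S^1\subseteq S^1xS^1$, $xaS^1\subseteq xS^1$, $S^1(cx)\subseteq S^1x$ together with their mirror images for the converses. That part of your write-up is correct and complete.

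The genuine gap is your treatment of the case $x=y$ in the $\cR$ and $\cL$ parts, which you set aside as ``trivial.'' It is not: once the witnesses are required to lie in $S$ rather than $S^1$, the claim for $x=y$ asserts the existence of some $a\in S$ with $xa=x$, i.e.\ $x\in xS$, and this can fail. In the free monogenic semigroup $\{t,t^2,t^3,\dots\}$ there is no $a$ with $ta=t$, yet $t\cR t$ holds by reflexivity of $\cR$, so the biconditional as literally printed (with $a,b\in S$) is false and no case analysis will rescue it. What your difficulty actually reveals is a misprint in the paper's transcription of Howie's Proposition~2.1.1: the witnesses in the $\cR$ and $\cL$ clauses should be allowed to lie in $S^1$, exactly as in the $\cJ$ clause (and ``$cx=b$'' in the $\cL$ clause should of course read ``$cx=y$'', as you silently corrected). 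With $a,b,c,d\in S^1$ throughout, your argument needs no case distinction at all and is precisely the intended proof; if you want to keep the stronger conclusion $a,b\in S$, you must add the hypothesis $x\neq y$ (where your observation that $xa=y\neq x$ rules out $a=1$ does work), or restrict to regular elements.
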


Equivalence classes under the $\cJ$, $\cR$, and $\cL$ relations are called $\cJ$-classes, $\cR$-classes, and $\cL$-classes, respectively.  Also, the $\cJ$-class, $\cR$-class, and $\cL$-class containing the element $x$ is denoted by $J_x$, $R_x$, and $L_x$. 

There is a natural partial order, $\leq_{\cJ}$, on the $\cJ$-classes of $S$ where, for $x,y\in S$, we have $J_x\leq_{\cJ} J_y$ if and only if $S^1xS^1\subseteq S^1yS^1$.  Similar partial orders on $\cR$- and $\cL$-classes exist, where for $x,y\in S$, $R_x\leq_{\cR} R_y$ if and only if $xS^1\subseteq yS^1$ and $L_x\leq_{\cL} L_y$ if and only if $S^1x\subseteq S^1y$.

The following is a useful result describing where products of elements lie in the partial order on Green's classes.

\begin{lem}[\cite{howie95fundamentals}, 2.1]\label{lem:Jpartialorder}
For all $x,y\in S$, $J_{xy}\leq_\cJ J_x$, $J_{xy}\leq_\cJ J_y$, $R_{xy}\leq_\cR R_x$, and\linebreak $L_{xy}\leq_\cL L_{y}$.
\end{lem}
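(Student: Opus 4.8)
The plan is to reduce all four assertions to the single elementary observation that $S^1$ is a monoid containing $S$, so that $aS^1\subseteq S^1$ and $S^1a\subseteq S^1$ for every $a\in S$. With this in hand, each of the required containments between principal (one- or two-sided) ideals follows by a one-line computation, using the defining characterizations of $\leq_\cR$, $\leq_\cL$, and $\leq_\cJ$ in terms of ideal inclusion.

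First I would dispose of the $\cR$- and $\cL$-statements. By definition, $R_{xy}\leq_\cR R_x$ means $xyS^1\subseteq xS^1$; writing $xyS^1=x(yS^1)$ and invoking $yS^1\subseteq S^1$ gives $x(yS^1)\subseteq xS^1$, as needed. The $\cL$-statement is the left-right mirror: $S^1xy=(S^1x)y\subseteq S^1y$ since $S^1x\subseteq S^1$, so $L_{xy}\leq_\cL L_y$. Then I would treat the two $\cJ$-statements in the same way: from $S^1xyS^1=S^1x(yS^1)\subseteq S^1xS^1$ we get $J_{xy}\leq_\cJ J_x$, and from $S^1xyS^1=(S^1x)yS^1\subseteq S^1yS^1$ we get $J_{xy}\leq_\cJ J_y$.

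There is essentially no obstacle here, since every step is immediate from the closure of $S^1$ under multiplication. The only point deserving a moment's attention is the bookkeeping of left versus right multiplication: one must keep track of which of $x$ or $y$ is "absorbed" in order to land on the correct side, which is precisely why the conclusion is asymmetric in $x$ and $y$ for the $\cR$- and $\cL$-relations (giving $R_x$ on the right and $L_y$ on the left) while remaining symmetric for $\cJ$. If desired, one could instead phrase the argument using the element-wise description of Green's relations, but the ideal-inclusion formulation keeps the proof shortest and makes the asymmetry transparent.
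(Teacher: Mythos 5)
Your proof is correct. The paper states this lemma as a citation to Howie (\cite{howie95fundamentals}, Section 2.1) and gives no proof of its own; your argument via the ideal inclusions $S^1xyS^1=S^1x(yS^1)\subseteq S^1xS^1$, $xyS^1=x(yS^1)\subseteq xS^1$, and their mirrors is exactly the standard one and matches the paper's definitions of $\leq_\cJ$, $\leq_\cR$, and $\leq_\cL$ in terms of containment of principal ideals.
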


Our investigations of covering numbers use several classification results for semigroups, namely Rees matrix semigroups and Rees 0-matrix semigroups, which we describe now.

\begin{defn}\label{defn:reesmatrixsemigroups}
Let $K$ and $\Lambda$ be nonempty sets and let $G$ be a group.
\begin{enumerate}[(i)]
    \item Let $P$ be a $|\Lambda|\times |K|$ matrix with entries in $G$.  Then the \emph{Rees matrix semigroup} $S=\cM[K,G,\Lambda;P]$ is the set of triples $K\times G\times \Lambda$ with multiplication defined by $$(\kappa,g,\lambda)(\mu,h,\nu)=(\kappa,gp_{\lambda,\mu}h,\nu).$$
    \item Let $Q$ be a $|\Lambda|\times |K|$ matrix over $G\cup \{0\}$. Then the \emph{Rees 0-matrix semigroup}\linebreak $S=\cM^0[K,G,\Lambda;Q]$ is the set $(K\times G\times \Lambda)\cup \{0\}$ with multiplication defined by $$(\kappa,g,\lambda)(\mu,h,\nu)=(\kappa,gq_{\lambda,\mu}h,\nu)$$ when $q_{\lambda,\mu}\neq 0$, $$(\kappa,g,\lambda)(\mu,h,\nu)=0$$ when $q_{\lambda,\mu}= 0$, and $$0\cdot s=s\cdot0=0$$ for all $s\in S$.  The matrix $Q$ is called \emph{regular} if each row and column contains a non-zero element.
\end{enumerate}
\end{defn}

Rees's Theorem \cite{Rees40} characterizes semigroups with certain $\cJ$-class structure.  To state the theorem, we need a few more definitions.

\begin{defn}[\cite{howie95fundamentals}, 3.2]
\begin{enumerate}[(i)]
    \item A semigroup $S$ is \emph{simple} if $S$ is comprised of a single\linebreak $\cJ$-class. 
    \item A semigroup $S$ with a zero, i.e.~$0\cdot s=0=s\cdot 0$ for all $s\in S$, is \emph{0-simple} if $S$ is a semigroup with two $\cJ$-classes, where one is the set $\{0\}$ and $S^2\neq \{0\}$.
    \item A semigroup $S$ is \emph{completely simple} or \emph{completely 0-simple} if $S$ is simple or 0-simple, respectively, and every non-empty set of $\cR$-classes and every non-empty set of $\cL$-classes has a minimal element.
\end{enumerate}
\end{defn}

Before stating Rees's Theorem, we note that finite simple and 0-simple semigroups are completely simple and completely 0-simple.

\begin{thm}[\cite{howie95fundamentals}, Theorem 3.2.3]\label{thm:Rees}
A semigroup $S$ is completely simple if and only if $S$ is isomorphic to a Rees matrix semigroup.  Also, $S$ is completely 0-simple if and only if $S$ is isomorphic to a Rees 0-matrix semigroup with a regular matrix.
\end{thm}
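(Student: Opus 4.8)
The plan is to prove both equivalences, carrying out the completely $0$-simple case in full and deducing the completely simple case from it by adjoining a zero. The forward implication in each case is a direct computation. If $S=\cM^0[K,G,\Lambda;P]$ with $P$ regular, then from the multiplication rule one reads off $(\kappa,g,\lambda)S^1=\{\kappa\}\times G\times\Lambda$ and $S^1(\kappa,g,\lambda)=K\times G\times\{\lambda\}$, so the nonzero $\cR$-classes are indexed by $K$, the nonzero $\cL$-classes by $\Lambda$, and distinct nonzero $\cR$-classes (resp. $\cL$-classes) are $\leq_{\cR}$-incomparable (resp. $\leq_{\cL}$-incomparable); hence every nonempty family of $\cR$- or $\cL$-classes has a minimal member. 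Regularity of $P$ gives a nonzero entry, so $S^2\neq\{0\}$, and it lets one move between any two nonzero elements by left and right multiplication, so the nonzero elements form a single $\cJ$-class; thus $S$ is $0$-simple and, with the minimality condition, completely $0$-simple. The identical computation with no products equal to $0$ shows $\cM[K,G,\Lambda;P]$ is completely simple.

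For the converse, let $S$ be completely $0$-simple with nonzero $\cJ$-class $J=S\setminus\{0\}$, and invoke the structure theory of completely $0$-simple semigroups from \cite{howie95fundamentals} (Chapters~2--3): $S$ contains a nonzero idempotent $e$; the $\cH$-class $G:=H_e$ is a group (the maximal subgroup at $e$); each ``egg-box cell'' $R\cap L$, with $R$ an $\cR$-class and $L$ an $\cL$-class of $J$, is a nonempty $\cH$-class, and distinct $\cR$-classes (resp. $\cL$-classes) within $J$ are $\leq$-incomparable; and Green's Lemma holds. Index the $\cR$-classes of $J$ by $K$ and the $\cL$-classes by $\Lambda$, with $e\in R_{\bar\kappa}\cap L_{\bar\lambda}$; for each $\kappa\in K$ pick $r_\kappa\in R_\kappa\cap L_{\bar\lambda}$ with $r_{\bar\kappa}=e$, and for each $\lambda\in\Lambda$ pick $q_\lambda\in R_{\bar\kappa}\cap L_\lambda$ with $q_{\bar\lambda}=e$. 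Since $r_\kappa\in S^1e$ one has $r_\kappa e=r_\kappa$, and since $q_\lambda\in eS^1$ one has $eq_\lambda=q_\lambda$; Green's Lemma then makes $x\mapsto xq_\lambda$ a bijection of $H_e$ onto $R_{\bar\kappa}\cap L_\lambda$ and $x\mapsto r_\kappa x$ a bijection of $R_{\bar\kappa}\cap L_\lambda$ onto $R_\kappa\cap L_\lambda$, so that $g\mapsto r_\kappa g q_\lambda$ is a bijection $G\to R_\kappa\cap L_\lambda$ and every element of $J$ equals $r_\kappa g q_\lambda$ for a unique triple $(\kappa,g,\lambda)$. Put $p_{\lambda,\kappa}:=q_\lambda r_\kappa$; by Lemma~\ref{lem:Jpartialorder} this lies in $(R_{\bar\kappa}\cap L_{\bar\lambda})\cup\{0\}=H_e\cup\{0\}$. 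Define $\phi\colon\cM^0[K,G,\Lambda;P]\to S$ by $\phi(0)=0$ and $\phi(\kappa,g,\lambda)=r_\kappa g q_\lambda$. Then $\phi$ is a bijection, and it is a homomorphism since $(r_\kappa g q_\lambda)(r_\mu h q_\nu)=r_\kappa(g(q_\lambda r_\mu)h)q_\nu$ equals $\phi(\kappa,\,g p_{\lambda,\mu}h,\,\nu)$ when $p_{\lambda,\mu}\neq0$ (as $g,p_{\lambda,\mu},h$ all lie in the group $H_e$) and equals $0=\phi(0)$ otherwise. Finally $P$ is regular: were some row $\lambda$ zero, then $q_\lambda x=0$ for every $x\in J$, hence $q_\lambda S=\{0\}$ and $q_\lambda S^1=\{q_\lambda,0\}$; but $q_\lambda\,\cR\,e$ gives $q_\lambda S^1=eS^1\ni e=e^2$, forcing $e=q_\lambda$ and then $eS=q_\lambda S=\{0\}\not\ni e^2$, a contradiction. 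Columns are symmetric, using $S r_\kappa$ and $S^1 r_\kappa=S^1 e$. So $S\cong\cM^0[K,G,\Lambda;P]$ with $P$ regular.

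For the completely simple case, $S$ has no zero and $S^2=S$ (as $S^2$ is an ideal of the simple semigroup $S$), so the zero-adjunction $S^0=S\cup\{0\}$ is completely $0$-simple; applying the previous paragraph and deleting $0$ presents $S$ as a Rees matrix semigroup, now with every entry of $P$ in $G$ because in $S$ every product of two elements already lies in $J=S$; the converse is the forward computation above. The substantive input is the structure-direction package --- existence of a nonzero idempotent from the minimality hypothesis, and the non-emptiness (and incomparability) of the egg-box cells of $J$ --- which is precisely the material built up in \cite{howie95fundamentals} before this theorem, and is the reason the result is quoted there rather than reproved; granted those facts, the coordinatization above and its verification are routine.
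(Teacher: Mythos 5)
This is Rees's Theorem, which the paper quotes from Howie (Theorem 3.2.3) and does not prove, so there is no in-paper argument to compare against; your proposal reconstructs the standard coordinatization proof from the cited source, and it is correct in all essentials. The forward computation, the choice of coset-style representatives $r_\kappa$, $q_\lambda$, the sandwich matrix $p_{\lambda,\kappa}=q_\lambda r_\kappa$, the regularity argument via $q_\lambda S^1=eS^1$, and the reduction of the simple case to the $0$-simple case by adjoining a zero are all sound. Two small remarks: $(\kappa,g,\lambda)S^1$ actually equals $(\{\kappa\}\times G\times\Lambda)\cup\{0\}$ since $(\kappa,g,\lambda)\cdot 0=0$, though this does not affect your identification of the nonzero $\cR$-classes; and the facts you import without proof --- the existence of a nonzero idempotent in a completely $0$-simple semigroup, the non-emptiness of every cell $R\cap L$ of the nonzero $\cJ$-class, and Green's Lemma --- are genuinely nontrivial, but you flag them explicitly and they are exactly the material Howie develops before stating the theorem, so relying on them is consistent with how the paper itself treats this result.
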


The final construction we have to mention is the principle factor.  It is defined as follows.

\begin{defn}[\cite{howie95fundamentals}, 3.1]\label{defn:principalfactor}
Let $S$ be a semigroup and $J$ be a $\cJ$-class of $S$.  The \emph{principle factor} of $\cJ$, denoted by $J^*$, of $J$ is a semigroup with elements $J\cup \{0\}$ and operation $*$ such that for $s,t\in J^*$, we have $s*t=st$ when $s,t,st\in J$ and $s*t=0$ otherwise. 
\end{defn}

Essentially, products in $J^*$ are the same as they are in $J$, but are set equal to 0 when the product lies outside of $J$.  Furthermore, the following property of $J^*$ is of interest in our investigations.  When $J$ is a maximal $\cJ$-class of a semigroup $S$ that is not simple, there is a natural surjective homomorphism $\phi : S \to J^*$ where $(s)\phi = s$ when $s \in J$ and $(s)\phi = 0$ otherwise

We conclude our list of preparatory results with a theorem characterizing principal factors. We note that a \emph{null} semigroup is a semigroup with a 0 such that every product is 0.

\begin{thm}[\cite{howie95fundamentals}, 3.1.6]\label{thm:Jstar}
Let $S$ be a semigroup and $J$ be a $\cJ$-class of $S$. Then $J^*$ is 0-simple or null.
\end{thm}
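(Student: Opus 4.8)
**Proof proposal for Theorem~3.1.6 ($J^*$ is 0-simple or null).**

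The plan is to dispose of the degenerate case first and then show that in all remaining cases $J^*$ is 0-simple. Recall from Definition~\ref{defn:principalfactor} that $J^* = J \cup \{0\}$, and note that by construction $J^*$ automatically has a zero element: for any $s \in J^*$ we have $s*0 = 0 = 0*s$ by definition, and moreover whenever a product of two elements of $J$ falls outside $J$ we set it to $0$, so $0$ genuinely absorbs. Thus $J^*$ is a semigroup with zero. There are exactly two $\cJ$-classes to worry about in $J^*$, namely $\{0\}$ and $J$ itself (one should check that $J$ really is a single $\cJ$-class \emph{inside} $J^*$, not just inside $S$ — this is the one piece of bookkeeping below). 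The dichotomy ``0-simple or null'' then amounts to: either $(J^*)^2 = \{0\}$, in which case $J^*$ is null by definition; or $(J^*)^2 \neq \{0\}$, in which case we must verify $J^*$ is $0$-simple, i.e. it has precisely the two $\cJ$-classes $\{0\}$ and $J$, and $S^2 \neq \{0\}$ (the latter being exactly the hypothesis $(J^*)^2 \neq \{0\}$).

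First I would handle the null case: if for all $s,t \in J$ we have $st \notin J$, then every product in $J^*$ equals $0$, so $J^*$ is a null semigroup and we are done. Otherwise, there exist $a,b \in J$ with $ab \in J$; I claim this forces $(J^*)^2 \neq \{0\}$ trivially, and more importantly it is the lever for proving $0$-simplicity. The key technical step is to show that for any $x \in J$, the element $x$ is $\cJ$-related to $a$ (equivalently to every element of $J$) \emph{within $J^*$}, not merely within $S$. Here one uses Lemma~\ref{lem:Jpartialorder} and the standard argument: since $a,b,ab$ all lie in the same $\cJ$-class $J$ of $S$, multiplication by $a$ on the left (resp. by $b$ on the right) restricts to a bijection of $\cR$-classes (resp. $\cL$-classes) within $J$ — this is the content of the ``translations permute Green's classes'' lemmas in Howie, Chapter~2. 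Consequently, for any $x,y \in J$ one can find elements of $J$ (hence of $J^* \setminus \{0\}$) realizing $x \cJ y$ via products that stay inside $J$, so these products are the same in $J^*$ as in $S$. Therefore $J$ is a single $\cJ$-class of $J^*$, and together with $\{0\}$ it exhausts $J^*$; combined with $(J^*)^2 \neq \{0\}$ this is precisely the definition of $0$-simple.

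The main obstacle is exactly this ``staying inside $J$'' point: the naive argument that $x \cJ y$ in $S$ gives $x \cJ y$ in $J^*$ fails because the witnessing products in $S$ might pass through elements outside $J$, where the $J^*$ multiplication collapses to $0$. The resolution is that within a single $\cJ$-class, the $\cR$- and $\cL$-class structure is rigid enough (via the Green's-lemma bijections, using the existence of at least one non-zero product $ab \in J$ to seed them) that one can always choose the witnesses inside $J$. I would quote the relevant Green's-lemma machinery from \cite{howie95fundamentals}, Chapter~2, rather than reprove it. Once that is in hand, the verification of the $0$-simplicity axioms is immediate from the definitions, and the proof closes by the dichotomy on whether some product of two elements of $J$ lands back in $J$.
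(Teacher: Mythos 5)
First, a point of reference: the paper does not prove this statement at all --- it is quoted from Howie (\cite{howie95fundamentals}, 3.1.6) as a known result --- so your proposal can only be measured against the standard proof. Your skeleton is right: the dichotomy on whether some product of two elements of $J$ lands back in $J$, the immediate disposal of the null case, and, most importantly, the correct identification of the crux, namely that $x\,\cJ\,y$ in $S$ does not give $x\,\cJ\,y$ in $J^*$ because the witnesses $s,t\in S^1$ with $sxt=y$ need not lie in $J^*$. But the step you offer to close that gap does not work. Green's Lemma produces bijections between the $\cL$-classes (resp.\ $\cR$-classes) inside a single $\cD$-class, induced by translations that \emph{witness} an $\cR$- (resp.\ $\cL$-) relation; it does not say that left multiplication by an arbitrary $a\in J$ permutes the $\cR$-classes of $J$, and one non-zero product $ab\in J$ does not ``seed'' a chain of such bijections joining two arbitrary elements $x,y\in J$. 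Two concrete obstructions: in an infinite semigroup a $\cJ$-class may properly contain several $\cD$-classes, and Green's Lemma never moves you between them; and even inside one $\cD$-class the translating elements it supplies are elements of $S^1$ with no reason to lie in $J$, so you are back at the original difficulty.

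The argument that actually closes the gap is ideal-theoretic, not Green-theoretic. Fix $x\in J$ and set $I=S^1xS^1-J$. If $u\in I$ then $J_{su}\le_\cJ J_u<_\cJ J$ by Lemma~\ref{lem:Jpartialorder}, so $I$ is an ideal of $S$ when non-empty, and $J^*$ is exactly the Rees quotient $S^1xS^1/I$, which is a $0$-minimal ideal $M$ of $S/I$ (any strictly larger ideal inside $S^1xS^1$ contains some element of $J$ and hence all of $S^1xS^1$). Now use the general lemma on $0$-minimal ideals: if $M^2\neq\{0\}$ then $M^2$ is a non-zero ideal contained in $M$, so $M^2=M=M^3$; and for any non-zero $y\in M$, $0$-minimality gives $S^1yS^1=M$ modulo $I$, whence $M=M^3=M(S^1yS^1)M\subseteq MyM\subseteq M$. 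The identity $MyM=M$ for every non-zero $y$ is precisely the statement you needed --- every $z\in J$ equals $p*y*q$ with $p,q\in J$ and all partial products remaining in $J$ --- and it immediately puts all non-zero elements of $J^*$ into one $\cJ$-class, giving $0$-simplicity under the paper's definition. If you keep your outline, replace the Green's Lemma paragraph with this $M=M^3\subseteq MyM$ computation, and treat separately the degenerate case $I=\emptyset$, where $J$ is the kernel of $S$, hence simple, and $J^*$ is $J$ with a zero adjoined.
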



\section{Covering finite semigroups}\label{sec:semigroups}
In this section, we give a proof of Theorem~\ref{thm:finitesemigroups}.  First we prove various lemmas, before presenting a cohesive proof at the end of the section.

We begin with the following observation about torsion groups.
\begin{lem}\label{lem:torsion}
If $S$ is a torsion group, then $\sigma_s(S)=\sigma_g(S)$.
\end{lem}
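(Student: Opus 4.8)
The plan is to reduce the semigroup covering problem to the group covering problem by showing that in a torsion group, every proper subsemigroup is contained in a proper subgroup, and conversely every proper subgroup is a proper subsemigroup. The key observation is that in a torsion group, a subsemigroup is automatically a subgroup: if $T$ is a subsemigroup of the torsion group $S$ and $t \in T$, then $t$ has finite order $k$, so $t^{k} = 1$ lies in $T$ (as $t^{k} = t \cdot t^{k-1}$ with $k-1 \geq 1$ when $k \geq 2$, and trivially when $k=1$), and $t^{-1} = t^{k-1} \in T$ as well. Hence $T$ is closed under inverses and contains the identity, so $T$ is a subgroup of $S$.

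Given this, the two covering numbers coincide on the nose. First I would note $\sigma_g(S) \geq \sigma_s(S)$ always holds for any group $S$, since every proper subgroup is in particular a proper subsemigroup, so any covering by proper subgroups is a covering by proper subsemigroups. For the reverse inequality, suppose $S = T_1 \cup \dots \cup T_n$ is a covering by proper subsemigroups. By the observation above, each $T_i$ is a subgroup of $S$; since $T_i \neq S$, each $T_i$ is a proper subgroup. Thus this is a covering of $S$ by proper subgroups, giving $\sigma_s(S) \geq \sigma_g(S)$. Combining the two inequalities yields $\sigma_s(S) = \sigma_g(S)$, with the understanding that this equality also holds in the case where neither covering exists (both sides being $\infty$), since the argument shows the set of semigroup coverings and the set of subgroup coverings coincide.

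There is essentially no obstacle here; the only point requiring a moment's care is the edge case of the identity element, namely confirming that the trivial subsemigroup $\{1\}$ is a (proper, assuming $|S| > 1$) subgroup, which it is, and that the argument $t^k = t \cdot t^{k-1} \in T$ is valid — this needs $k - 1 \geq 1$, i.e.\ $t \neq 1$, but if $t = 1$ then $t^{-1} = 1 = t \in T$ trivially. So the reasoning covers all elements. The finiteness of $S$ is not needed — only that $S$ is torsion — which is presumably why the lemma is stated for torsion groups rather than finite groups; it will be applied to finite groups as the relevant special case in the proof of Theorem~\ref{thm:finitesemigroups}(ii).
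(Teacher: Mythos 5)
Your proposal is correct and follows essentially the same route as the paper: show that in a torsion group every subsemigroup is automatically a subgroup (since $x^{-1}=x^{n-1}$ and $1=x^n$ lie in the subsemigroup), so the two families of proper substructures coincide and the covering numbers are equal. Your extra care with the $t=1$ edge case and the explicit two-inequality bookkeeping are fine but not substantively different from the paper's argument.
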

\begin{proof}
Clearly, every subgroup of $S$ is also a subsemigroup.  Let $T$ be a subsemigroup of $S$ and let $x\in T$.  Since $S$ is torsion, there exists an $n\in\mathbb{N}$ such that $x^n=1$ and $x^{n-1}=x^{-1}$.  Note that $T$ is closed under multiplication and therefore $T$ contains the identity and $x^{-1}$. We see $T$ is a group.  Thus subsemigroups of $S$ are also subgroups of $S$, and we conclude $\sigma_s(S)=\sigma_g(G)$.
\end{proof}

The following corollary is an immediate consequence. 

\begin{cor}\label{cor:finitegroups}
If $S$ is a finite group, then $\sigma_s(S)=\sigma_g(S)$
\end{cor}

The $\cJ$-class structure of semigroups allows us to find proper subsemigroups. The methods used to construct these proper subsemigroups were inspired by a 1968 paper by Graham et al.~\cite{GrahamGrahamRhodes68}, in which the maximal proper subsemigroups of an arbitrary finite semigroup are characterized.

\begin{lem}\label{lem:SwithoutJ}
Let $S$ be a semigroup and let $J$ be a maximal $\cJ$-class of $S$ under the partial order. Then the set difference $S-J$ is a semigroup, provided $S-J\neq \emptyset$.
\end{lem}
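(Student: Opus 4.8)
The plan is to show that $S - J$ is closed under multiplication, which suffices since associativity is inherited. Take $x, y \in S - J$; I want to show $xy \in S - J$, i.e.\ $xy \notin J$. Suppose for contradiction that $xy \in J$. By Lemma \ref{lem:Jpartialorder}, we have $J_{xy} \leq_{\cJ} J_x$ and $J_{xy} \leq_{\cJ} J_y$, so $J = J_{xy} \leq_{\cJ} J_x$ and $J \leq_{\cJ} J_y$. Since $J$ is a maximal $\cJ$-class under the partial order $\leq_{\cJ}$, this forces $J_x = J$ and $J_y = J$, contradicting $x, y \notin J$. Hence $xy \notin J$, so $xy \in S - J$, and $S - J$ is a subsemigroup whenever it is nonempty.

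The argument is essentially immediate from Lemma \ref{lem:Jpartialorder} together with maximality, so there is no serious obstacle; the only point requiring a word of care is the direction of the inequality. One must note that $J$ being \emph{maximal} means there is no $\cJ$-class strictly above it, so any $\cJ$-class $J'$ with $J \leq_{\cJ} J'$ must equal $J$ — this is exactly what is applied to $J_x$ and $J_y$. It is also worth observing that the hypothesis $S - J \neq \emptyset$ is genuinely needed, since a semigroup is required to be nonempty; if $S$ itself is a single $\cJ$-class (i.e.\ $S$ is simple), then $S - J = \emptyset$ and the statement is vacuous.

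A minor subtlety: I should make sure the partial order is being used consistently with the convention in the excerpt, where $J_x \leq_{\cJ} J_y$ means $S^1 x S^1 \subseteq S^1 y S^1$, and $J_{xy} \leq_{\cJ} J_x$ holds because $S^1 (xy) S^1 \subseteq S^1 x S^1$. With this convention, the maximal $\cJ$-classes are the ones at the \emph{top}, and the displacement of any product lands at or below each factor, so removing a top class cannot create a product that climbs back into it. I would phrase the final write-up in two or three sentences: state that closure under multiplication is all that is needed, invoke Lemma \ref{lem:Jpartialorder}, and conclude by maximality.
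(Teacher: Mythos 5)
Your proof is correct and is essentially the paper's own argument: both rest on Lemma~\ref{lem:Jpartialorder} ($J_{xy}\leq_{\cJ}J_x$) combined with the maximality of $J$, the only difference being that you run it as a contradiction ($xy\in J$ forces $J\leq_{\cJ}J_x$, hence $J_x=J$) while the paper argues directly by cases on whether $J_x$ is below or incomparable to $J$.
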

\begin{proof}
Let $x,y\in S-J$. We will show $xy\in S-J$.  Either $J_x$ is incomparable to $J$ or $J_x<_{\cJ} J$, since $x\not\in J$.  By Lemma~\ref{lem:Jpartialorder}, we have $J_{xy}\leq_{\cJ} J_x$. With transitivity of the partial order, we see $J_{xy}<J$ or $J_{xy}$ is incomparable to $J$. We conclude that $xy\not\in J$ and thus $xy\in S-J$.
\end{proof}

\begin{cor}\label{cor:semigroupwithnogenJ}
Let $S$ be a semigroup with a maximal $\cJ$-class $J$ such that $\langle J \rangle\neq S$.  Then $\sigma_s(S)=2$.
\end{cor}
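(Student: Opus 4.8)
The plan is to exhibit an explicit covering of $S$ by two proper subsemigroups, namely $\langle J\rangle$ and $S-J$, so that $\sigma_s(S)\leq 2$, and then observe that no single proper subsemigroup can cover $S$.

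First I would dispose of the degenerate case by showing $S-J\neq\emptyset$. Indeed, if $S=J$ then $S$ consists of a single $\cJ$-class, whence $\langle J\rangle=\langle S\rangle=S$, contradicting the hypothesis $\langle J\rangle\neq S$. So $S-J\neq\emptyset$, and since $J$ is a maximal $\cJ$-class, Lemma~\ref{lem:SwithoutJ} applies and tells us that $S-J$ is a subsemigroup of $S$. It is a \emph{proper} subsemigroup because $J$, being a $\cJ$-class, is non-empty.

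Next I would note that $\langle J\rangle$ is a proper subsemigroup by hypothesis and is non-empty because it contains $J$. Every element of $S$ lies either in $J\subseteq\langle J\rangle$ or in the complement $S-J$, so $S=\langle J\rangle\cup(S-J)$ is a union of two proper subsemigroups, giving $\sigma_s(S)\leq 2$. Since a proper subsemigroup is by definition a proper subset of $S$, a single one cannot cover $S$, so $\sigma_s(S)\geq 2$, and therefore $\sigma_s(S)=2$.

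There is essentially no real obstacle here: the only point requiring care is the degenerate case $S=J$, and the hypothesis $\langle J\rangle\neq S$ is exactly what is needed both to make $\langle J\rangle$ proper and to rule out $S=J$, so that Lemma~\ref{lem:SwithoutJ} is applicable to $S-J$.
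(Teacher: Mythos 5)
Your proof is correct and follows exactly the paper's argument: decompose $S$ as $\langle J\rangle\cup(S-J)$, use Lemma~\ref{lem:SwithoutJ} for the complement, and note that the hypothesis $\langle J\rangle\neq S$ forces $S-J\neq\emptyset$. You simply spell out the details that the paper's one-line proof leaves implicit.
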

\begin{proof}
We have $S=(S-J)\cup\langle J \rangle$ and if $\langle J \rangle\neq S$, then $S-J$ is non-empty.
\end{proof}

Now consider a finite semigroup $S$. We see that $S$ will have at least one maximal $\cJ$-class, $J$. Corollary~\ref{cor:semigroupwithnogenJ} says that $\sigma_s(S)=2$ unless $\langle J \rangle=S$.  This leaves two cases: when $J=S$ and when $\langle J \rangle=S$ but $J\neq S$.

Beginning with the case when $J=S$, recall that Rees's Theorem states that when $S$ is a finite semigroup with a single $\cJ$-class, $S$ is isomorphic to a Rees matrix semigroup.

\begin{lem}\label{lem:reesmatrix}
Let $S=\cM[K,G,\Lambda;P]$ be a Rees matrix semigroup. If $|K|>1$ or $|\Lambda|>1$, then $\sigma_s(S)=2$.  If $|K|=1$ and $|\Lambda|=1$, then $S$ is a group.
\end{lem}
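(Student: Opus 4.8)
The plan is to handle the two cases separately. First, the easy case: if $|K|=1$ and $|\Lambda|=1$, then $S = \{(\kappa, g, \lambda) : g \in G\}$ with multiplication $(\kappa,g,\lambda)(\kappa,h,\lambda) = (\kappa, g p_{\lambda,\kappa} h, \lambda)$; since $p_{\lambda,\kappa}$ is a fixed element of $G$, this is just an isomorphic copy of $G$ under a twisted operation, and one checks directly (or by the standard observation that $g \mapsto (\kappa, p_{\lambda,\kappa}^{-1} g, \lambda)$ or similar is an isomorphism) that $S$ is a group. So the content is entirely in the first case.

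So suppose $|K| > 1$ or $|\Lambda| > 1$; by symmetry (taking anti-isomorphisms) assume $|K| > 1$. The idea is to write $S$ as a union of two proper subsemigroups by splitting the first coordinate. Fix a partition $K = K_1 \cup K_2$ into two nonempty sets (if $|K| > 1$ this is possible). Set $S_i = K_i \times G \times \Lambda$ for $i = 1, 2$. From the multiplication rule $(\kappa,g,\lambda)(\mu,h,\nu) = (\kappa, g p_{\lambda,\mu} h, \nu)$, the first coordinate of a product is always the first coordinate of the left factor, so each $S_i$ is closed under multiplication, hence a subsemigroup; each is proper since $K_i \neq K$; and $S = S_1 \cup S_2$. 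Thus $\sigma_s(S) \leq 2$. For the reverse inequality, note that no semigroup is the union of a single proper subsemigroup (trivially), so $\sigma_s(S) \geq 2$ as long as $S$ itself is not already coverable by fewer — i.e. $\sigma_s(S) \neq 1$ always, since $\sigma_s$ counts proper subsemigroups. Hence $\sigma_s(S) = 2$.

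The only real subtlety is the claim in the second case that $S$ is a group when $|K| = |\Lambda| = 1$: one must exhibit the identity, which is $(\kappa, p_{\lambda,\kappa}^{-1}, \lambda)$, and inverses, and verify they work against the twisted product — a short routine check. Everything else is immediate from the structure of the multiplication in a Rees matrix semigroup. I anticipate no genuine obstacle; the main thing to be careful about is invoking the anti-isomorphism symmetry correctly so that the case $|\Lambda| > 1$ is genuinely covered by the argument written for $|K| > 1$ (alternatively, just run the symmetric argument splitting the third coordinate, which works because the third coordinate of a product equals the third coordinate of the right factor).
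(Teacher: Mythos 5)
Your proposal is correct and follows essentially the same route as the paper: the paper splits the first coordinate as $\{\kappa\}$ versus $K-\{\kappa\}$ (a special case of your partition $K=K_1\cup K_2$), handles $|\Lambda|>1$ symmetrically, and verifies the group case by exhibiting the identity $(\kappa,p_{\lambda,\kappa}^{-1},\lambda)$ and explicit inverses. No substantive differences.
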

\begin{proof}
First, consider the case when $|K|>1$.  Let $\kappa\in K$ and consider the subset\linebreak $T=\{\kappa\}\times G\times \Lambda$ of $S$.  For $(\kappa,g,\lambda),(\kappa,h,\mu)\in T$, we have $$(\kappa,g,\lambda)(\kappa,h,\mu)=(\kappa,gp_{\lambda,\kappa}h,\mu)\in T,$$ so $T$ is a proper subsemigroup of $S$.  Next, consider the complement of $T$ in $S$, i.e.~the set $S-T$, where $u\in S-T$ whenever $u=(\nu,g,\lambda)$ with $\nu\neq \kappa$.  Obviously, for $u,u'\in S-T$, we have $uu'\in S-T$.  Thus $S-T$ is a subsemigroup of $S$.  We conclude $S=T\cup (S-T)$ and $\sigma_s(S)=2$.

The case when $|\Lambda|>1$ is handled similarly.

Lastly, we consider the case when $|K|=|\Lambda|=1$. Let $S=\{\kappa\}\times G \times \{\lambda\}$ and $P=[g]$ where $g\in G$.  Through direct calculation, we see that the element $(\kappa,g^{-1},\lambda)$ is an identity in $S$ and the element $(\kappa,g^{-1}h^{-1}g^{-1},\lambda)$ is the inverse of $(\kappa,h,\lambda)$.  Therefore $S$ is a group.
\end{proof}

We now consider the second case, where the semigroup $S$ has a maximal $\cJ$-class $J$ that generates $S$ but $J \neq S$. Let $J^*$ be the principal factor of $J$ (see Definition~\ref{defn:principalfactor}).  Recall that we have a surjection $S\to J^*$ and that the principal factor $J^*$ is either null or 0-simple by Theorem~\ref{thm:Jstar}.  Furthermore, applying Theorem~\ref{thm:Rees}, we obtain that $J^*$ is null or isomorphic to a Rees 0-matrix semigroup with a regular matrix. We first consider the case that $S$ is a Rees 0-matrix semigroup.

\begin{lem}
Let $S=\cM^0[K,G,\Lambda;P]$ be a Rees 0-matrix semigroup with a regular matrix $P$. Then $\sigma_s(S)=2$.
\end{lem}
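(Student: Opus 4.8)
The plan is to show $\sigma_s(S)\le 2$ by writing $S$ as a union of two proper subsemigroups; since no semigroup is the union of a single proper subsemigroup, this forces $\sigma_s(S)=2$. The guiding idea mirrors the treatment of ordinary Rees matrix semigroups in Lemma~\ref{lem:reesmatrix}: slice $S$ along a coordinate. The one genuinely new feature is the zero element, which dictates that the two slices must each be taken to contain $0$.

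First I would record the basic closure fact: for any subset $X$ of $K$, the set $(X\times G\times\Lambda)\cup\{0\}$ is a subsemigroup of $S$, because in a nonzero product $(\kappa,g,\lambda)(\mu,h,\nu)=(\kappa,gp_{\lambda,\mu}h,\nu)$ the first coordinate is inherited from the left factor, while a zero product lands in $\{0\}$; symmetrically, for any $Y\subseteq\Lambda$ the set $(K\times G\times Y)\cup\{0\}$ is a subsemigroup, since the third coordinate of a nonzero product is inherited from the right factor. This is also the point at which to observe that the naive complement $S-(\{\kappa\}\times G\times\Lambda)$ fails to be a subsemigroup — it omits $0$ — which is exactly why $0$ must be thrown into both halves of the covering.

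With this in hand I split into cases. If $|K|>1$, fix $\kappa\in K$ and set $A=(\{\kappa\}\times G\times\Lambda)\cup\{0\}$ and $B=((K\setminus\{\kappa\})\times G\times\Lambda)\cup\{0\}$: both are subsemigroups by the closure fact, both are proper because $G$ and $\Lambda$ are nonempty (so the opposite slice is inhabited), and $A\cup B=S$. If instead $|\Lambda|>1$, the mirror-image argument with the third coordinate works verbatim. The remaining case is $|K|=|\Lambda|=1$, where $S=\{\kappa\}\times G\times\{\lambda\}\cup\{0\}$ with $P=[p]$; here regularity of $P$ forces $p\ne 0$, hence $p\in G$, and then $S\setminus\{0\}$ is closed under multiplication (indeed a group isomorphic to $G$, just as in Lemma~\ref{lem:reesmatrix}), so $S=(S\setminus\{0\})\cup\{0\}$ is a covering by two proper subsemigroups.

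There is no real obstacle here; the only things to be careful about are (a) remembering to adjoin $0$ to each coordinate-slice so that it is genuinely a subsemigroup, and (b) isolating the degenerate case $|K|=|\Lambda|=1$, which is the only place regularity of $P$ is actually used — it guarantees that $S$ is a group with a zero adjoined rather than a null-type semigroup.
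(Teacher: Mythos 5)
Your proof is correct and follows essentially the same route as the paper's: slice off $(\{\kappa\}\times G\times\Lambda)\cup\{0\}$ and its complement-with-zero when $|K|>1$ (symmetrically for $|\Lambda|>1$), and in the degenerate case $|K|=|\Lambda|=1$ use regularity to see that $S\setminus\{0\}$ is a group, so $S$ is covered by $S\setminus\{0\}$ and $\{0\}$. The only difference is expository — you make explicit the closure fact for arbitrary coordinate-slices and the observation that regularity is only needed in the degenerate case, both of which the paper leaves implicit.
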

\begin{proof}
If $|K|>1$, then let $\kappa\in K$ and we see $R=(\{\kappa\}\times G\times \Lambda)\cup \{0\}$ is a proper subsemigroup of $S$, using a similar argument as in the proof of Lemma~\ref{lem:reesmatrix}.  Similiarly, we see $(S-R)\cup \{0\}$ is another proper subsemigroup of $S$. Thus $S=R\cup ((S-R)\cup \{0\})$ and hence $\sigma_s(S)=2$.  When $|\Lambda|>1$, the same technique applies.

Now consider the case when $|K|=|\Lambda|=1$.  The single entry in the matrix $P$ must be an element of $G$ and therefore $K\times G\times \Lambda$ is a proper subsemigroup of $S$.  Since $\{0\}$ is also a proper subsemigroup of $S$, we have $\sigma_s(S)=2$
\end{proof}

By taking preimages using the surjection $\phi:S\to J^*$, the following is clear.

\begin{cor}\label{cor:rees0matrixsemigroups}
Let $S$ be a semigroup with a $\cJ$-class $J$ such that $S=\langle J\rangle$.  If $S\neq J$ and $J^*$ is isomorphic to a regular Rees 0-matrix semigroup, then $\sigma_s(S)=2$.
\end{cor}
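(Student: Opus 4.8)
The strategy is to lift the covering of $J^*$ back to $S$ via the surjective homomorphism $\phi : S \to J^*$ guaranteed (for a maximal $\cJ$-class $J$ with $S \neq J$) in the remark following Definition~\ref{defn:principalfactor}. By the preceding lemma, since $J^*$ is isomorphic to a regular Rees $0$-matrix semigroup, we have $\sigma_s(J^*) = 2$; write $J^* = A \cup B$ with $A, B$ proper subsemigroups. The first step is to observe that $A\phi^{-1}$ and $B\phi^{-1}$ are subsemigroups of $S$, which is immediate because the preimage of a subsemigroup under a homomorphism is a subsemigroup. Since $\phi$ is onto, $S = A\phi^{-1} \cup B\phi^{-1}$, so the only remaining point is to check that these preimages are \emph{proper} subsemigroups of $S$.

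The key step — and the one requiring a small argument — is properness. Suppose $A\phi^{-1} = S$; then $S\phi = A$, but $\phi$ is surjective, so $A = J^*$, contradicting that $A$ is a proper subsemigroup; the same reasoning handles $B\phi^{-1}$. Hence both preimages are proper, $S = A\phi^{-1}\cup B\phi^{-1}$ is a covering by two proper subsemigroups, and $\sigma_s(S) \le 2$. Finally, $\sigma_s(S) \ge 2$ always holds for any semigroup that is not monogenic — and $S$ is not monogenic here, since $S = \langle J\rangle$ with $J$ a $\cJ$-class containing at least two elements (it maps onto the Rees $0$-matrix semigroup $J^*$ minus $0$, which has more than one element because a regular matrix forces $|K|, |\Lambda| \ge 1$ and the $G$-component alone, together with $0$, already gives $|J^*| \ge 2$; more to the point, a monogenic finite semigroup has a unique maximal $\cJ$-class which is a singleton or a cyclic group, and in neither case is $S = \langle J \rangle$ with $J \ne S$ and $J^*$ $0$-simple). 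So $\sigma_s(S) = 2$.

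I expect essentially no obstacle here: the corollary is a routine pullback of the lemma along $\phi$, and the phrase ``the following is clear'' in the excerpt signals exactly this. The only place to be mildly careful is invoking the existence of the surjection $\phi$, which needs $J$ to be a maximal $\cJ$-class and $S \neq J$ — both hypotheses are present — and confirming the lower bound $\sigma_s(S)\ge 2$, which follows since $S$ is not generated by a single element. No new macros or environments are needed beyond those already defined in the paper.
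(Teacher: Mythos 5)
Your proposal is correct and follows the same route as the paper: the paper's entire proof is to pull the two-subsemigroup covering of $J^*$ from the preceding lemma back through the surjection $\phi:S\to J^*$, which is exactly what you do, and your verification of properness of the preimages is the right (if routine) check. One small simplification: the lower bound $\sigma_s(S)\geq 2$ needs no argument about monogenicity, since no single \emph{proper} subsemigroup can have union equal to $S$.
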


It remains to consider the case when $J^*$ is null.

\begin{lem}\label{lem:nullismonogenic}
Let $S$ be a semigroup with a $\cJ$-class $J$ such that $S=\langle J\rangle$.  If $S\neq J$ and $J^*$ is null, then $S$ is monogenic and $\sigma_s(S)=\infty$.
\end{lem}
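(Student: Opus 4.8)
The plan is to translate the hypothesis ``$J^*$ is null'' into a concrete statement about products of elements of $J$, use that to force $|J|=1$, and then invoke the elementary fact that a monogenic semigroup has no finite covering.

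First I would observe that $J$ is the maximum $\cJ$-class of $S$: since $S=\langle J\rangle$, every element of $S$ is a product $j_1\cdots j_k$ of $k\geq 1$ elements of $J$, and by Lemma~\ref{lem:Jpartialorder} any such product lies in a $\cJ$-class $\leq_\cJ J$. Next I would unpack nullity of $J^*$: by Definition~\ref{defn:principalfactor}, $s*t=0$ in $J^*$ for all $s,t\in J$, and since $s*t=st$ whenever $st\in J$ while $0\notin J$, this says exactly that $st\notin J$ for all $s,t\in J$. In particular no element of $J$ is idempotent and, more to the point, if $b\in S$ is a product $j_1j_2\cdots j_k$ with $k\geq 2$ and all $j_i\in J$, then $j_1j_2\notin J$, so $J_{j_1j_2}<_\cJ J$ (it is $\leq_\cJ J$ by Lemma~\ref{lem:Jpartialorder} and is not equal to $J$), whence $J_b\leq_\cJ J_{j_1j_2}<_\cJ J$. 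Thus \emph{no element of $J$ is a product of two or more elements of $J$}.

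Now I would rule out an identity in $S$: an identity would be an idempotent lying in the maximum $\cJ$-class $J$ (standard, as $S^1eS^1=S$ for an identity $e$), contradicting the line above. Hence $S^1=S\cup\{1\}$ with $1\notin S$, so each element of $S$ is a \emph{genuine} nonempty product of elements of $J$. To conclude $|J|=1$, take $a,b\in J$; as $a$ and $b$ are $\cJ$-related we may write $b=xay$ with $x,y\in S^1$. If $x=y=1$ then $a=b$; otherwise at least one of $x,y$ lies in $S$, hence is a nonempty product of elements of $J$, exhibiting $b$ as a product of at least two elements of $J$ --- impossible by the previous step. So $a=b$, i.e.\ $J=\{a\}$, and therefore $S=\langle J\rangle=\langle a\rangle$ is monogenic. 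Finally, if $S=\langle a\rangle$ then any subsemigroup of $S$ containing $a$ equals $S$, so $a$ lies in no proper subsemigroup and no union of proper subsemigroups can be all of $S$; hence $\sigma_s(S)=\infty$.

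The only step carrying real content is the reduction to $|J|=1$; the thing to be careful about is correctly translating ``$J^*$ null'' into ``the product of any two elements of $J$ lies outside $J$'' and then keeping the bookkeeping on word lengths and on strict versus non-strict $\cJ$-order inequalities straight.
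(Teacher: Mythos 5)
Your proof is correct and follows essentially the same route as the paper: both reduce to showing $|J|=1$ by writing one element of $J$ as $xay$ with $x,y\in S^1$ and using nullity of $J^*$ to rule out the case where $x$ or $y$ is a genuine product of elements of $J$. Your version is in fact slightly more careful than the paper's (you explicitly exclude an identity in $S$ and track word lengths so that factors which are single elements of $J$ are handled), but the underlying argument is the same.
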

\begin{proof}
We will show that $|J|=1$.  Let $x,y\in J$.  Then there exist elements $a,b\in S^1$ such that $axb=y$.  Assume for contradiction that $a\neq 1$ or $b\neq 1$.  Since $J$ generates $S$, at least one of $a$ or $b$ is a product of elements in $J$.  However, $J^*$ is null, meaning that the product of elements from $J$ is not contained in $J$, i.e.~$a\not\in J$ or $b\not\in J$.  This shows that $axb\not\in J$ and thus $axb\neq y$.  This is a contradiction, and therefore $a=1$ and $b=1$.  We conclude that $x=y$ and $|J|=1$.  Therefore $S$ is monogenic and $\sigma_s(S)=\infty$.
\end{proof}

We now present the proof of Theorem~\ref{thm:finitesemigroups}, using the above lemmas and corollaries.

\begin{proof}[Proof of Theorem~\ref{thm:finitesemigroups}]
Let $S$ be a finite semigroup.  If $S$ is not generated by a single $\cJ$-class, then $\sigma_s(S)=2$ by Corollary~\ref{cor:semigroupwithnogenJ}.  If $S$ is generated by a single $\cJ$-class $J$, there are two cases to consider: when $S=J$ and when $S\neq J$.

In the case that $S=J$, we have $S$ is a Rees matrix semigroup.  Using Corollary~\ref{cor:finitegroups} and Lemma~\ref{lem:reesmatrix}, we see that either $S$ is a group and $\sigma_s(S)=\sigma_g(S)$, or otherwise $\sigma_s(S)=2$.

Lastly, in the case that $S\neq J$, then $S$ surjects onto $J^*$, which is either a Rees 0-matrix semigroup or a null semigroup. Corollary~\ref{cor:rees0matrixsemigroups} and Lemma \ref{lem:nullismonogenic} imply that $\sigma_s(S)=2$ when $J^*$ is a Rees 0-matrix semigroup or $\sigma_s(S)=\infty$ when $J^*$ is null, since $S$ is monogenic.
\end{proof}


\section{Covering Finite Inverse Semigroups}\label{sec:inversesemigroups}
In this section, we give a proof of Theorem~\ref{thm:inversesemigroups}, which deals with covering numbers of finite inverse semigroups, as given in Definition~\ref{defn:algebraicstructures}.  Several important facts about inverse semigroups are summarized in the following lemma.  For further details, we refer to Chapter 5 of \cite{howie95fundamentals}.

\begin{lem}\label{lem:inversesemigroupbasics}
Let $I$ be an inverse semigroup.  Then $a\cJ a^{-1}$, $(a^{-1})^{-1}=a$, and\linebreak $(ab)^{-1}=b^{-1}a^{-1}$ for all $a,b\in I$.  
\end{lem}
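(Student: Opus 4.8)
The plan is to establish the three claims in the order $(ab)^{-1}=b^{-1}a^{-1}$, then $(a^{-1})^{-1}=a$, then $a\,\cJ\,a^{-1}$, since the first two are purely computational and the third follows easily from the defining relations. For the product formula, I would first verify directly that the element $c=b^{-1}a^{-1}$ satisfies the two inverse axioms with respect to $ab$, namely $(ab)c(ab)=ab$ and $c(ab)c=c$. This is where the main work lies. Expanding $(ab)(b^{-1}a^{-1})(ab)$, I would need to insert copies of $bb^{-1}$ and $a^{-1}a$ in the right places and use the fact — which must be shown as a preliminary step — that idempotents of the form $aa^{-1}$ and $a^{-1}a$ commute with one another. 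The standard route is: show first that all idempotents commute in an inverse semigroup (if $e,f$ are idempotents, then $ef$ has an inverse, and using uniqueness one checks $ef$ is itself idempotent, hence $(ef)^{-1}=ef$; symmetry gives $fe=ef$), and then the rearrangement in $(ab)(b^{-1}a^{-1})(ab)$ goes through cleanly. Once both axioms are checked for $c=b^{-1}a^{-1}$, uniqueness of the inverse forces $(ab)^{-1}=b^{-1}a^{-1}$.

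For the involution property $(a^{-1})^{-1}=a$, I would simply observe that the defining relations $a a^{-1} a=a$ and $a^{-1}aa^{-1}=a^{-1}$ are symmetric in $a$ and $a^{-1}$: they say precisely that $a$ is an element satisfying the two inverse axioms relative to $a^{-1}$. By uniqueness of the inverse of $a^{-1}$, we get $(a^{-1})^{-1}=a$. No computation beyond invoking uniqueness is needed here.

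For $a\,\cJ\,a^{-1}$, I would use the characterization of $\cJ$ via two-sided principal ideals, or equivalently the existence of $p,q,r,s\in I^1$ with $paq=a^{-1}$ and $ra^{-1}s=a$. From $a^{-1}aa^{-1}=a^{-1}$ we may take $p=a^{-1}$, $q=a^{-1}$ (so $paq=a^{-1}a a^{-1}=a^{-1}$), and from $aa^{-1}a=a$ we may take $r=a$, $s=a$ (so $ra^{-1}s=aa^{-1}a=a$). Hence $S^1 a S^1 = S^1 a^{-1} S^1$ and $a\,\cJ\,a^{-1}$. The main obstacle in the whole lemma is the commuting-idempotents step feeding into $(ab)^{-1}=b^{-1}a^{-1}$; everything else is a direct appeal to the uniqueness clause in the definition of an inverse semigroup. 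Since the excerpt explicitly points to Chapter~5 of \cite{howie95fundamentals}, an acceptable alternative is to cite these as standard facts, but I would at least sketch the commuting-idempotents argument for completeness.
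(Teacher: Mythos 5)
Your plan is correct: the paper itself gives no proof of this lemma, simply citing Chapter~5 of \cite{howie95fundamentals}, and your argument is precisely the standard one found there (commuting idempotents via uniqueness of inverses, then the two von Neumann identities for $b^{-1}a^{-1}$, the symmetry observation for $(a^{-1})^{-1}=a$, and the ideal containments $a^{-1}=a^{-1}aa^{-1}\in S^1aS^1$ and $a=aa^{-1}a\in S^1a^{-1}S^1$ for the $\cJ$-relation). The only point worth tightening is the commuting-idempotents step: after showing $ef$ is idempotent you still need the short computation that $fe$ is an inverse of $ef$ before uniqueness yields $fe=ef$, but as you note this is standard and citing it would be acceptable here.
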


Our proof of Theorem~\ref{thm:monoids} splits into three cases: when $I$ is a group, when $I$ is not generated by a single $\cJ$-class, and otherwise.  The first two cases are very easy and follow along the line of the proofs in Section~\ref{sec:semigroups}. By Lemma~\ref{lem:torsion}, the case when a finite inverse semigroup is a group is clear.  In the case that $I$ is not generated by a single $\cJ$-class, the same technique of Lemma~\ref{lem:SwithoutJ} applies to inverse semigroups, taking the set difference with a maximal $\cJ$-class.

\begin{lem}\label{lem:inverseSwithoutJ}
Let $I$ be a finite inverse semigroup and $J$ be a maximal $\cJ$-class of $I$.  Then $I-J$ is an inverse subsemigroup of $I$.
\end{lem}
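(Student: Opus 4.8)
The plan is to combine Lemma~\ref{lem:SwithoutJ} with the fact that the inverse operation in an inverse semigroup preserves $\cJ$-classes. First I would note that, since $I$ is in particular a semigroup and $J$ is a maximal $\cJ$-class, Lemma~\ref{lem:SwithoutJ} already shows that $I-J$ is a subsemigroup of $I$, provided $I-J\neq\emptyset$; this nonemptiness holds whenever $I\neq J$, which is the only case of interest (when $I=J$ the set difference is empty and the statement is vacuous, exactly as in Lemma~\ref{lem:SwithoutJ}). So the only thing left to verify is that $T:=I-J$ is closed under taking inverses in $I$, and that this closure is enough to make $T$ an inverse semigroup in its own right.

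For closure under inverses I would invoke Lemma~\ref{lem:inversesemigroupbasics}: for every $a\in I$ we have $a\cJ a^{-1}$, so $J_{a^{-1}}=J_a$. Hence if $a\in T$, i.e.\ $J_a\neq J$, then also $J_{a^{-1}}=J_a\neq J$, so $a^{-1}\in T$. Thus $T$ is a subsemigroup of $I$ that is closed under the inverse map of $I$.

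Finally I would check that such a $T$ is an inverse subsemigroup of $I$. For $a\in T$, the element $a^{-1}\in T$ satisfies $aa^{-1}a=a$ and $a^{-1}aa^{-1}=a^{-1}$, so $a$ has an inverse inside $T$; and if some $b\in T$ also satisfied $aba=a$ and $bab=b$, then $b$ would be an inverse of $a$ in $I$, forcing $b=a^{-1}$ by the uniqueness of inverses in the inverse semigroup $I$. Therefore every element of $T$ has a unique inverse in $T$, so $T$ is an inverse semigroup, hence an inverse subsemigroup of $I$.

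There is no genuine obstacle in this argument; the only points that need care are carrying over the nonemptiness hypothesis $I\neq J$ from Lemma~\ref{lem:SwithoutJ} and making explicit that closure under the ambient inverse operation already forces uniqueness of inverses within the subsemigroup, so that $T$ indeed satisfies the definition of an inverse semigroup.
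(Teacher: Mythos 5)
Your proof is correct and follows essentially the same route as the paper: apply Lemma~\ref{lem:SwithoutJ} to get that $I-J$ is a subsemigroup, then use $a\,\cJ\,a^{-1}$ from Lemma~\ref{lem:inversesemigroupbasics} to get closure under inverses. The extra remarks on nonemptiness and on uniqueness of inverses inside the subsemigroup are details the paper leaves implicit, but they do not change the argument.
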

\begin{proof}
By Lemma~\ref{lem:SwithoutJ}, it follows that $I-J$ is a subsemigroup of $I$.  Let $a\in I-J$.  By Lemma~\ref{lem:inversesemigroupbasics}, we have $a^{-1}\cJ a$ and so $a^{-1}\in I-J$.  Therefore $I-J$ is an inverse subsemigroup.
\end{proof}

Similar to Corollary~\ref{cor:semigroupwithnogenJ} for finite semigroups, we have an analogue for finite inverse semigroups.

\begin{cor}\label{cor:inversesemigroupwithoutJ}
Let $I$ be a finite inverse semigroup and $J$ be a maximal $\cJ$-class of $I$.  If $\langle J\rangle\neq I$, then $\sigma_i(I)=2$.
\end{cor}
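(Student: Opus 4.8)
The plan is to mimic the proof of Corollary~\ref{cor:semigroupwithnogenJ} verbatim, the only extra work being to check that both pieces of the two-element covering are inverse subsemigroups and not merely subsemigroups. Since $\langle J\rangle\neq I$, in particular $J\neq I$, so $I-J$ is non-empty; by Lemma~\ref{lem:inverseSwithoutJ} it is then an inverse subsemigroup of $I$, and it is proper because $J$, being a $\cJ$-class, is non-empty.

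Next I would verify that $\langle J\rangle$ is a proper inverse subsemigroup. By Lemma~\ref{lem:inversesemigroupbasics} we have $a\cJ a^{-1}$ for every $a\in I$, so the $\cJ$-class $J$ is closed under taking inverses; hence the subsemigroup generated by $J$ is already closed under inverses and is therefore an inverse subsemigroup (so that in this setting $\langle J\rangle$ coincides with the ordinary generated subsemigroup). It is proper by hypothesis.

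Finally, every element of $I$ lies either in $J\subseteq\langle J\rangle$ or in $I-J$, so $I=\langle J\rangle\cup(I-J)$ exhibits $I$ as the union of two proper inverse subsemigroups, giving $\sigma_i(I)\leq 2$; and $\sigma_i(I)\geq 2$ since a single proper inverse subsemigroup cannot equal $I$. Hence $\sigma_i(I)=2$. I do not expect any genuine obstacle here: the result is the inverse-semigroup shadow of Corollary~\ref{cor:semigroupwithnogenJ}, and the only point deserving an explicit remark is that $\langle J\rangle$ inherits closure under inverses from the $\cJ$-class $J$.
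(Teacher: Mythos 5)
Your proposal is correct and follows essentially the same route as the paper: decompose $I$ as $(I-J)\cup\langle J\rangle$, use Lemma~\ref{lem:inverseSwithoutJ} for the first piece, and use Lemma~\ref{lem:inversesemigroupbasics} (namely $a\,\cJ\,a^{-1}$ and $(ab)^{-1}=b^{-1}a^{-1}$) to see that $\langle J\rangle$ is closed under inverses. Your write-up is just slightly more explicit than the paper's about non-emptiness and properness of the two pieces.
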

\begin{proof}
By Lemma~\ref{lem:inverseSwithoutJ}, we have that $I-J$ is an inverse subsemigroup of $I$.  Also, $\langle J\rangle$ is an inverse subsemigroup, because $(ab)^{-1}\in \langle J\rangle$ by Lemma~\ref{lem:inversesemigroupbasics}.  Since $\langle J\rangle\neq I$, we have $I=(I-J)\cup \langle J\rangle$ and hence $\sigma_i(I)=2$.
\end{proof}

We now consider when $I$ is generated by a single $\cJ$-class $J$.  By Rees's Theorem, if $I=J$, then $I$ is a Rees matrix semigroup and $J^*$ is a Rees 0-matrix semigroup.  The following theorem gives insight into the structure of Green's equivalence classes in inverse semigroups, including inverse Rees matrix and 0-matrix semigroups.

\begin{thm}[\cite{howie95fundamentals}, 5.1.1]\label{thm:inverseidempotents}
For a semigroup $S$, the following are equivalent:
\begin{enumerate}[(i)]
    \item $S$ is an inverse semigroup;
    \item every $\cR$-class as well as every $\cL$-class contains exactly one idempotent.
\end{enumerate}
\end{thm}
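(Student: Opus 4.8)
The plan is to prove the two implications of the equivalence separately, using only elementary manipulations with Green's relations and with the defining identities of inverses. The single external ingredient I would invoke is the standard fact that in any \emph{regular} semigroup---one in which every $a$ satisfies $a = axa$ for some $x$---each $\cR$-class and each $\cL$-class contains at least one idempotent. This is immediate: if $a = axa$, then $e := ax$ is idempotent, since $e^2 = axax = ex = e$, and $e = ax \in aS^1$ while $a = ea \in eS^1$, so $e\,\cR\,a$; dually $xa$ is an idempotent that is $\cL$-related to $a$. I would also record the trivial remark that every idempotent $e$ is one of its own inverses, since $eee = e$.

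For $(i)\Rightarrow(ii)$: an inverse semigroup is in particular regular, so every $\cR$-class and every $\cL$-class already contains an idempotent, and the real content is uniqueness. Suppose $e, f$ are idempotents with $e\,\cR\,f$. From $eS^1 = fS^1$ I may write $f = es$ and $e = ft$ with $s, t \in S^1$, whence $ef = e(es) = f$ and $fe = f(ft) = e$; then $efe = fe = e$ and $fef = ef = f$, so $f$ is an inverse of $e$. Since $e$ is also an inverse of $e$, uniqueness of inverses in $S$ forces $e = f$. The $\cL$-case is completely dual: from $S^1 e = S^1 f$ one obtains $ef = e$ and $fe = f$, and again $f$ is an inverse of $e$.

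For $(ii)\Rightarrow(i)$: I would first extract regularity from the hypothesis. Given $a$, let $e$ be the (unique) idempotent lying in $R_a$; from $a\,\cR\,e$ we get $ea = a$ and $e = as$ for some $s \in S^1$. If $s = 1$ then $a = e$ is idempotent, hence its own inverse; if $s \in S$ then $asa = ea = a$, so $a$ is regular. Either way $a$ has an inverse, so $S$ is regular. For uniqueness, suppose $b$ and $c$ are both inverses of $a$. Then $ab$, $ba$, $ac$, $ca$ are all idempotents (for instance $(ab)^2 = a(bab) = ab$), and, using $a = aba = (ab)a = a(ba)$, one checks $ab\,\cR\,a\,\cR\,ac$ and $ba\,\cL\,a\,\cL\,ca$. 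Since $R_a$ contains only one idempotent we get $ab = ac$, and similarly $ba = ca$; therefore $b = bab = (ba)b = (ca)b = c(ab) = c(ac) = cac = c$, so inverses are unique and $S$ is an inverse semigroup.

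Almost every step is routine bookkeeping with $S^1$ and the inverse identities. The one place that calls for a little care is the regularity step in $(ii)\Rightarrow(i)$: the hypothesis mentions only idempotents and not regularity, so one must recover an equation $a = axa$ from the mere presence of an idempotent in $R_a$ (equivalently in $L_a$). I do not expect any genuine obstacle beyond handling this point cleanly.
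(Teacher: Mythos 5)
Your proof is essentially correct, but note that the paper offers no proof of this statement at all: it is quoted directly from Howie's monograph (Theorem 5.1.1 there), so there is nothing internal to compare against. Your argument is a clean, self-contained direct proof of the two-way equivalence, and in fact it is leaner than Howie's treatment, which routes the equivalence through the intermediate characterization ``$S$ is regular and its idempotents commute.'' Two small points to tidy. First, in your verification that $ax$ is idempotent you write $e^2 = axax = ex = e$; the middle term should be bracketed as $(axa)x = ax = e$, since $ex = ax^2$ is not what you want. Second, in the regularity step of $(ii)\Rightarrow(i)$ you pass from ``$asa = a$ for some $s$'' to ``$a$ has an inverse''; under the paper's Definition 1.1(iii) an inverse must also satisfy $xax = x$, so you should insert the standard remark that if $asa = a$ then $x := sas$ satisfies both $axa = a$ and $xax = x$. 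With those repairs the proof is complete: the uniqueness argument in $(i)\Rightarrow(ii)$ (exhibiting an $\cR$- or $\cL$-related idempotent $f$ as an inverse of $e$ and invoking uniqueness of inverses) and the uniqueness argument in $(ii)\Rightarrow(i)$ (showing $ab,ac$ are idempotents in $R_a$ and $ba,ca$ are idempotents in $L_a$, then computing $b = bab = (ca)b = c(ac) = c$) are both sound.
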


Since each $\cR$-class as well as $\cL$-class of an inverse semigroup contains a unique idempotent, there must be the same number of  $\cR$- and $\cL$-classes.   We use this fact to prove the following lemma.

\begin{lem}\label{lem:inversereesmatrix}
Let $I$ be a finite inverse Rees matrix semigroup.  Then $I$ is a group and $\sigma_i(I)=\sigma_g(I)$.
\end{lem}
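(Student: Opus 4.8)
The plan is to show that a finite inverse Rees matrix semigroup $I = \cM[K,G,\Lambda;P]$ must have $|K| = |\Lambda| = 1$, and then invoke Lemma~\ref{lem:reesmatrix}, which tells us that in that case $I$ is a group; finally $\sigma_i(I) = \sigma_g(I)$ follows from Lemma~\ref{lem:torsion} (applied via its observation that every subsemigroup of a finite group is a subgroup, so inverse subsemigroups coincide with subgroups). So the whole content is pinning down the cardinalities of the index sets.

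First I would recall that a Rees matrix semigroup $\cM[K,G,\Lambda;P]$ is simple (a single $\cJ$-class) and that its $\cR$-classes are exactly the sets $\{\kappa\}\times G\times\Lambda$ for $\kappa\in K$, while its $\cL$-classes are the sets $K\times G\times\{\lambda\}$ for $\lambda\in\Lambda$; this is standard and can be read off the multiplication rule $(\kappa,g,\lambda)(\mu,h,\nu)=(\kappa,gp_{\lambda,\mu}h,\nu)$, since the first coordinate is preserved on the left and the last coordinate on the right. Hence $I$ has exactly $|K|$ $\cR$-classes and $|\Lambda|$ $\cL$-classes. The idempotents of $I$ are the triples $(\kappa,p_{\lambda,\kappa}^{-1},\lambda)$, one for each pair $(\kappa,\lambda)$, as a direct computation with the multiplication shows.

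Now I invoke Theorem~\ref{thm:inverseidempotents}: since $I$ is an inverse semigroup, each $\cR$-class contains exactly one idempotent and each $\cL$-class contains exactly one idempotent. But the $\cR$-class $\{\kappa\}\times G\times\Lambda$ contains the idempotent $(\kappa,p_{\lambda,\kappa}^{-1},\lambda)$ for every $\lambda\in\Lambda$, so uniqueness forces $|\Lambda|=1$; symmetrically, the $\cL$-class $K\times G\times\{\lambda\}$ contains $(\kappa,p_{\lambda,\kappa}^{-1},\lambda)$ for every $\kappa\in K$, forcing $|K|=1$. (Equivalently, one can argue that an inverse semigroup has equally many $\cR$- and $\cL$-classes and that a Rees matrix semigroup with more than one of each would have more than one idempotent per class.) With $|K|=|\Lambda|=1$, Lemma~\ref{lem:reesmatrix} gives that $I$ is a group, and then $\sigma_i(I)=\sigma_g(I)$ by the argument of Lemma~\ref{lem:torsion}.

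The only mild obstacle is bookkeeping: one must be careful that the idempotents are correctly identified and that the $\cR$- and $\cL$-class descriptions are accurate for the chosen convention of Rees matrix multiplication (the paper's convention has the first coordinate fixed on the left factor and the last on the right). Once the idempotent count per $\cR$-class and per $\cL$-class is computed, the contradiction with Theorem~\ref{thm:inverseidempotents} when $|K|>1$ or $|\Lambda|>1$ is immediate, so I expect no real difficulty beyond this verification.
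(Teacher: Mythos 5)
Your proposal is correct and follows essentially the same route as the paper: both identify the idempotents $(\kappa,p_{\lambda,\kappa}^{-1},\lambda)$, describe the $\cR$- and $\cL$-classes of $\cM[K,G,\Lambda;P]$, and invoke Theorem~\ref{thm:inverseidempotents} to force $|K|=|\Lambda|=1$, after which $I\cong G$ and $\sigma_i(I)=\sigma_g(I)$ via the torsion argument of Lemma~\ref{lem:torsion}. The only cosmetic difference is that you derive $|\Lambda|=1$ from the $\cR$-class and $|K|=1$ from the $\cL$-class, while the paper runs both counts through the $\cL$-class and says ``similarly'' for the other.
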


\begin{proof}
Let $I=\cM[K,G,\Lambda;P]$ be an inverse semigroup.  For any $\kappa\in K$ and any $\lambda\in\Lambda$, we have $x=(\kappa,p_{\lambda,\kappa},\lambda)$ is an idempotent with $$R_x=\{(\kappa,g,\mu)\mid g\in G,\mu\in\Lambda\}\quad\mathrm{and}\quad L_x=\{(\nu,g,\lambda)\mid g\in G,\nu\in K\}.$$  If $|K|>1$ and $\kappa\neq \nu$, then $(\kappa,p_{\lambda \kappa}^{-1},\lambda)$ and $(\nu,p_{\lambda \nu}^{-1},\lambda)$ would be two distinct idempotents in $L_x$, a contradiction.  Thus $|K|=1$ and similarly $|\Lambda|=1$.  We conclude $\cM[K,G,\Lambda;P]\cong G$ and $\sigma_i(I)=\sigma_g(I)$.
\end{proof}

For an inverse Rees 0-matrix semigroup $I=\cM^0[K,G,\Lambda;P]$, Theorem~\ref{thm:inverseidempotents} implies\linebreak $|K|=|\Lambda|$ as elements of $K$ and $\Lambda$ correspond to distinct $\cR$- and $\cL$-classes, respectively.  We will simply assume $K=\Lambda$ for ease of notation. Furthermore, if $p_{\lambda,\kappa}\neq 0$, then $(\kappa,p_{\lambda,\kappa}^{-1},\lambda)$ is an idempotent, so $P$ contains a single non-zero entry in each row and column. 

Let $e\in G$ be the identity of $G$.  Without loss of generality (using Theorem~3.4.3 from \cite{howie95fundamentals}), we will assume $p_{\kappa,\kappa}=e$ for all $\kappa\in K$ and $p_{\lambda,\kappa}=0$ otherwise. Notice that this is just a reordering of the rows and columns of the matrix $P$ so it is diagonal, and a \emph{normalization} of entries to be equal to the identity or 0.  Essentially, $P$ is the identity matrix.

The fact that the matrix of an inverse Rees 0-matrix semigroup is diagonal helps lead to the following lemma. Recall that $J^*$ is the principle factor of a $\cJ$-class $J$, given in Definition~\ref{defn:principalfactor}

\begin{lem}\label{lem:inverseress0matrix}
Let $I$ be a finite inverse semigroup with maximal $\cJ$-class $J$ such that $\langle J\rangle = I$.  Then $J^*$ is isomorphic to a Rees 0-matrix semigroup $\cM^0[K,G,K;P]$ such that $|K|\geq2$.
\end{lem}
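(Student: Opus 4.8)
The plan is to combine the two structural facts just established about inverse semigroups generated by a single $\cJ$-class with the case analysis already carried out in Section~\ref{sec:semigroups}. Since $I$ is finite, the principal factor $J^*$ is $0$-simple or null by Theorem~\ref{thm:Jstar}, and since finite $0$-simple semigroups are completely $0$-simple, Theorem~\ref{thm:Rees} gives that $J^*$ is either null or isomorphic to a Rees $0$-matrix semigroup $\cM^0[K,G,\Lambda;P]$ with $P$ regular. First I would dispose of the null case: if $J^*$ were null, then by Lemma~\ref{lem:nullismonogenic} the semigroup $I=\langle J\rangle$ would be monogenic; but a finite monogenic inverse semigroup is a (cyclic) group — indeed every subsemigroup of a torsion group is a group by Lemma~\ref{lem:torsion}, so a monogenic inverse semigroup is generated by a single element of finite order and hence is a cyclic group — contradicting the hypothesis that $I$ is not a group. (Alternatively: a null principal factor forces $|J|=1$, so $J$ is a single idempotent, and $\langle J\rangle=J$ is the trivial group, again a group.) Hence $J^*\cong\cM^0[K,G,\Lambda;P]$ with $P$ regular.

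Next I would pin down that $|K|=|\Lambda|$ and then reduce to the common index set. The key input is Theorem~\ref{thm:inverseidempotents}: the principal factor $J^*$ of an inverse semigroup is itself an inverse semigroup (one checks that the operation $*$ respects taking inverses, using $a\cJ a^{-1}$ from Lemma~\ref{lem:inversesemigroupbasics} so that $a,a^{-1}$ lie in $J$ together, and $(ab)^{-1}=b^{-1}a^{-1}$), so in $J^*$ every $\cR$-class and every $\cL$-class contains exactly one idempotent. In a Rees $0$-matrix semigroup the nonzero $\cR$-classes are indexed by $K$ and the nonzero $\cL$-classes by $\Lambda$, so this forces $|K|=|\Lambda|$, and as the excerpt already notes one may then take $K=\Lambda$ and normalize $P$ to the identity matrix, exactly the content of the discussion preceding the lemma. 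This yields $J^*\cong\cM^0[K,G,K;P]$.

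Finally I would show $|K|\geq 2$. Suppose instead $|K|=1$. After normalization $P=[e]$, so $J^*=\cM^0[\{\kappa\},G,\{\kappa\};[e]]=(\{\kappa\}\times G\times\{\kappa\})\cup\{0\}$, and $\{\kappa\}\times G\times\{\kappa\}\cong G$ is a subsemigroup with zero adjoined. In particular $J=\{\kappa\}\times G\times\{\kappa\}$ is already closed under multiplication in $J^*$, and since $\phi\colon I\to J^*$ is a surjective homomorphism with $(s)\phi=s$ for $s\in J$, the set $J$ is a subsemigroup of $I$; as $J$ is a single $\cR$-class of an inverse semigroup it contains exactly one idempotent, so $J$ — being a finite semigroup that is a single $\cJ$-class with one idempotent in each of its single $\cR$- and $\cL$-classes — is a group (it is $\cM[\{\kappa\},G,\{\kappa\};P]\cong G$ by Lemma~\ref{lem:inversereesmatrix}, or directly by Lemma~\ref{lem:torsion} since it is a finite cancellative semigroup). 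Then $I=\langle J\rangle=J$ is a group, contradicting the hypothesis. Hence $|K|\geq 2$.

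The main obstacle I anticipate is the bookkeeping in the middle step: verifying cleanly that the principal factor of an inverse semigroup is again an inverse semigroup (so that Theorem~\ref{thm:inverseidempotents} may be invoked on $J^*$), and that the index-set counting genuinely transfers through the Rees $0$-matrix description — this is where one must be careful that $0$ is not an $\cR$- or $\cL$-class being miscounted and that the normalization of $P$ preserves the inverse-semigroup structure. The null-case exclusion and the final $|K|\geq 2$ argument are short once the hypothesis "$I$ is not a group" is used.
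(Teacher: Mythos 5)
Your proposal follows essentially the same route as the paper's proof: apply Theorem~\ref{thm:Jstar} and Theorem~\ref{thm:Rees} to get that $J^*$ is null or a regular Rees $0$-matrix semigroup, rule out the null case, use Theorem~\ref{thm:inverseidempotents} to force $|K|=|\Lambda|$ (with the normalization of $P$ from the preceding discussion), and rule out $|K|=1$ because $J$ would then be a group, so $\langle J\rangle=J$ and $I$ would itself be that group. Two small corrections. First, the paper dispatches the null case in one line: in a null semigroup $aba=0$ for all $b$, so no nonzero element can have an inverse, and hence the inverse semigroup $J^*$ cannot be null. Your detour through Lemma~\ref{lem:nullismonogenic} reaches the same conclusion, but the justification that a finite monogenic inverse semigroup is a cyclic group is circular as written --- Lemma~\ref{lem:torsion} presupposes you already sit inside a torsion group; the clean argument is that $x^{-1}=x^k$ for some $k\geq 1$, whence $x=xx^{-1}x=x^{k+2}$ and $\langle x\rangle$ is a cyclic group. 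Second, your parenthetical alternative is wrong: if $J^*$ is null and $J=\{x\}$, then $x^2\notin J$, so $x$ is \emph{not} an idempotent and $\langle J\rangle$ is not the trivial group. Neither issue affects the main line of the argument, which is sound and matches the paper's.
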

\begin{proof}
By Theorem~\ref{thm:Rees} and the paragraphs preceding this lemma, $J^*$ is null or is isomorphic to a Rees 0-matrix semigroup $\cM^0[K,G,\Lambda;P]$ with identity matrix $P$. However, $J^*$ cannot be null as only 0 would have an inverse. Also, if $|K|=1$, then $K\times G\times K$ would be a group and $\langle J\rangle\neq I$.  Therefore $|K|>1$.
\end{proof}

In this case, where $I$ is generated by a single $\cJ$-class but is not equal to a single $\cJ$-class, the following lemma shows that $I$ is not the union of two proper inverse subsemigroups.

\begin{pro}\label{pro:inversereesnotcoveredby2}
Let $I=\cM^0[K,G,K;P]$ be a finite inverse Rees 0-matrix semigroup, where $|K|\geq2$.  Then $\sigma_i(I)\neq 2$.
\end{pro}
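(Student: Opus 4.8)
The plan is to assume for contradiction that $I=A\cup B$ for proper inverse subsemigroups $A,B$ and to force one of them to equal $I$. By the normalization recalled just before the statement we may take $P$ to be the identity matrix, so that in $I$ one has $(\kappa,g,\lambda)(\mu,h,\nu)=(\kappa,gh,\nu)$ when $\lambda=\mu$ and $=0$ otherwise, and $(\kappa,g,\lambda)^{-1}=(\lambda,g^{-1},\kappa)$. The non-zero idempotents are the elements $e_\kappa:=(\kappa,e,\kappa)$, and for $\kappa,\lambda\in K$ I write $H_{\kappa\lambda}:=\{(\kappa,g,\lambda):g\in G\}$ for the $\cH$-class they determine; each $H_{\kappa\kappa}$ is a subsemigroup isomorphic to $G$.

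The first step is to show that each group $\cH$-class lies entirely inside $A$ or entirely inside $B$: the sets $A\cap H_{\kappa\kappa}$ and $B\cap H_{\kappa\kappa}$ are, when non-empty, subsemigroups of the finite group $H_{\kappa\kappa}$ and hence subgroups, at least one of them is non-empty since $e_\kappa\in A\cup B$, and their union is $H_{\kappa\kappa}$; since a group is never the union of two proper subgroups, one of them is all of $H_{\kappa\kappa}$. The second step rules out mixed behaviour: one cannot have $H_{\kappa\kappa}\subseteq A$ with $H_{\kappa\kappa}\not\subseteq B$ for one index while $H_{\lambda\lambda}\subseteq B$ with $H_{\lambda\lambda}\not\subseteq A$ for another (note $\kappa\neq\lambda$), because inspecting $(\kappa,e,\lambda)$ we find that if it lies in $A$ then the identity $(\lambda,e,\kappa)(\kappa,g,\kappa)(\kappa,e,\lambda)=(\lambda,g,\lambda)$ forces $H_{\lambda\lambda}\subseteq A$, while if it lies in $B$ the same computation with $\kappa,\lambda$ interchanged forces $H_{\kappa\kappa}\subseteq B$ — each a contradiction. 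Combined with the first step this means all $H_{\kappa\kappa}$ lie in $A$ or all lie in $B$; by symmetry I assume $H_{\kappa\kappa}\subseteq A$ for every $\kappa$.

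It remains to force $B=I$. Since $A$ contains every diagonal $\cH$-class and also $0$ (because $e_\kappa e_\lambda=0$ for $\kappa\neq\lambda$, and such a pair exists as $|K|\geq2$), properness of $A$ yields an off-diagonal element $(\kappa_0,g_0,\lambda_0)\notin A$. Using $H_{\kappa\kappa}\subseteq A$, if $A$ meets an off-diagonal class $H_{\kappa\lambda}$ at all then $(\kappa,hg^{-1},\kappa)(\kappa,g,\lambda)=(\kappa,h,\lambda)$ shows $H_{\kappa\lambda}\subseteq A$; hence the relation $\kappa\sim\lambda\iff H_{\kappa\lambda}\subseteq A$ is an equivalence relation on $K$ (reflexive since $H_{\kappa\kappa}\subseteq A$, symmetric via inverses, transitive via $(\kappa,g,\lambda)(\lambda,h,\mu)=(\kappa,gh,\mu)$), and it has at least two classes since $\kappa_0\not\sim\lambda_0$. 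Let $C$ be the class of $\kappa_0$ and $C'=K\setminus C\neq\emptyset$; for $\kappa\in C$, $\lambda\in C'$ the class $H_{\kappa\lambda}$ misses $A$, so $H_{\kappa\lambda}\cup H_{\lambda\kappa}\subseteq B$. Multiplying such elements of $B$ through a common index chosen in the opposite class (for instance $(x,g,\lambda)(\lambda,e,y)=(x,g,y)$) then gives $H_{xy}\subseteq B$ for every pair $x,y\in K$, so $B$ contains all of $I\setminus\{0\}$; and $0\in B$ as well (for instance $(\kappa_0,e,\lambda_0)^2=0$), so $B=I$, contradicting properness. Hence $A$ and $B$ cannot both be proper and $\sigma_i(I)\neq2$.

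I expect the last step to be the delicate one: converting ``$A$ omits a single off-diagonal $\cH$-class'' into ``$B$ is everything'' rests on the all-or-nothing dichotomy for off-diagonal classes and on carefully tracking where products of $B$-elements land, including the degenerate case in which $\kappa_0$'s class is a singleton (there $H_{\kappa_0\kappa_0}\subseteq B$ must be extracted from a product rather than read off directly). The two soft inputs that make everything else routine are that a group is not the union of two proper subgroups (used in step one) and the conjugation trick transporting one maximal subgroup $H_{\kappa\kappa}$ onto another (step two); beyond those, the argument is coordinate arithmetic in the Rees $0$-matrix semigroup.
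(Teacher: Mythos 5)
Your proof is correct, but it takes a genuinely different route from the paper's. The paper proves the stronger statement that for \emph{any} proper inverse subsemigroup $H$ of $I$ the complement $H^c$ generates $I$ (which immediately forbids a $2$-cover), and this forces a three-way case analysis on the idempotents of $H$; the hardest case there is when $H$ contains every non-zero idempotent, all $\cJ^H$-related, but meets each diagonal group $\{\kappa\}\times G\times\{\kappa\}$ only in a proper subgroup, so cosets must be tracked. You instead assume a two-element cover $I=A\cup B$ from the outset and exploit it immediately: since each diagonal $\cH$-class is a finite group and no group is the union of two proper subgroups, one of $A,B$ must contain each diagonal class \emph{entirely}, and your conjugation step then forces them all into one of the two, say $A$. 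This short-circuits the paper's delicate case (iii) entirely --- you never need to handle proper subgroups of the diagonal. Your remaining work (the all-or-nothing dichotomy for off-diagonal $\cH$-classes and the equivalence relation on $K$, whose classes play the role of the paper's $\cJ^H$-classes of idempotents) is the analogue of the paper's cases (i) and (ii), transplanted from ``$H^c$ generates $I$'' to ``$B=I$.'' What the paper's route buys is a stronger, reusable fact about the complement of a single proper inverse subsemigroup; what yours buys is a shorter self-contained contradiction for the proposition as stated. All of your coordinate computations check out under the normalization of $P$ to the identity matrix established in the paper just before the proposition, and the degenerate singleton-class case you flag is indeed covered by routing the product through an index in the opposite class.
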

\begin{proof}
Let $H$ be a proper inverse subsemigroup of $I=\cM[K,G,K;P]$, where $|K|\geq2$, and $H^c$ be the complement of $H$ in $I$.  We will prove that $\langle H^c\rangle=I$ in three cases:
\begin{enumerate}[(i)]
\item $H$ does not contain every non-zero idempotent of $I$;
\item $H$ contains every non-zero idempotent of $I$ and not every non-zero idempotent is\linebreak $\cJ^H$-related;
\item $H$ contains every non-zero idempotent of $I$ and each non-zero idempotent is\linebreak $\cJ^H$-related.
\end{enumerate}
Note that each non-zero idempotent of $I$ is of the form $(\kappa,e,\kappa)$ for some $\kappa\in K$.

\begin{enumerate}[(i)]
    \item Let $\kappa\in K$ such that $(\kappa,e,\kappa)\not\in H$.  Also let $\lambda,\mu\in K$ and $g\in G$ so that $(\lambda,g,\mu)\in I$.  We first show that $(\kappa,g,\mu)\not\in H$ via contradiction.  Suppose $(\kappa,g,\mu)\in H$.  Since $H$ is an inverse semigroup, we have $(\kappa,g,\mu)^{-1}\in H$ with $(\kappa,g,\mu)^{-1}=(\mu,g^{-1},\kappa)$.  Furthermore, $$(\kappa,g,\mu)(\mu,g^{-1},\kappa)=(\kappa,e,\kappa)$$ which contradicts the fact that $H$ is closed.  By a similar argument, $(\lambda,e,\kappa)\not\in H$.  Therefore $$(\lambda,e,\kappa)(\kappa,g,\mu)=(\lambda,g,\mu)\in \langle H^c\rangle,$$  and hence, we have $\langle H^c\rangle=I$.
    \item Let $\lambda,\mu\in K$. First, we consider the subcase when $(\lambda,e,\lambda)$ and $(\mu,e,\mu)$ are not\linebreak $\cJ^H$-related.  We claim this implies $(\lambda,g,\mu)\not\in H$ for each $g\in G$.  Suppose to the contrary that $(\lambda,g,\mu)\in H$.  Since $H$ is an inverse semigroup, we have $(\lambda,g,\mu)^{-1}\in H$ with  $(\lambda,g,\mu)^{-1}=(\mu,g^{-1},\lambda)$.  Furthermore, $$(\lambda,g,\mu)(\mu,e,\mu)(\mu,g^{-1},\lambda)=(\lambda,e,\lambda)$$ and $$(\mu,g^{-1},\lambda)(\lambda,e,\lambda)(\lambda,g,\mu)=(\mu,e,\mu),$$ which contradicts the fact that $(\lambda,e,\lambda)$ is not $\cJ^H$-related to $(\mu,e,\mu)$.  Therefore $(\lambda,g,\mu)\in H^c$.
    
    Now suppose that $(\lambda,e,\lambda)$ is $\cJ^H$-related to $(\mu,e,\mu)$.  By assumption, this implies that there exists a $\kappa\in K$ such that $(\kappa,e,\kappa)$ is neither $\cJ^H$-related to $(\lambda,e,\lambda)$ nor to $(\mu,e,\mu)$.  By the previous arguments, this implies $(\lambda,g,\kappa)\in H^c$ and $(\kappa,e,\mu)\in H^c$.  Therefore $$(\lambda,g,\kappa)(\kappa,e,\mu)=(\lambda,g,\mu)\in \langle H^c\rangle.$$ We have shown $\langle H^c\rangle=I$ in this case.
    \item Let $\kappa,\lambda,\mu\in K$.  This means that $(\kappa,e,\kappa)$, $(\lambda,e,\lambda)$, and $(\mu,e,\mu)$ are idempotents contained in $H$.  By our assumption, there exist elements $(\kappa,g,\lambda),(\kappa,h,\mu)\in H$ such that $$(\kappa,g,\lambda)^{-1}(\kappa,e,\kappa)(\kappa,g,\lambda)=(\lambda,e,\lambda)$$ and $$(\kappa,h,\mu)^{-1}(\kappa,e,\kappa)(\kappa,h,\mu)=(\mu,e,\mu).$$  Let $H_{\kappa,\lambda}=H\cap (\{\lambda\}\times G\times \{\lambda\})$.  We see that $H_{\lambda,\mu}=(\mu,g,\lambda)^{-1}H_{\kappa,\kappa}(\kappa,h,\mu)$ and therefore we have $|H_{\kappa,\kappa}|=|H_{\lambda,\mu}|$.  Since $H\neq I$, it follows that $H_{\lambda,\mu}\subsetneq\{\lambda\}\times G\times \{\mu\}$. Therefore $G$ is not the trivial group.  We can also see that $\{\kappa\}\times G\times \{\kappa\}$  is a group, of which $H_{\kappa,\kappa}$ is a subsemigroup. Lemma~\ref{lem:torsion} then implies that $H_{\kappa,\kappa}$ is a group.  Since $\{\kappa\}\times G\times \{\kappa\}$ is a group and no group is the union of two proper subgroups, we conclude that $$\langle H^c\cap (\{\kappa\}\times G\times \{\kappa\})\rangle=\{\kappa\}\times G\times \{\kappa\}.$$  However, there exist elements $(\kappa,g',\lambda),(\kappa,h',\mu)\in H^c$, since $$H_{\kappa,\lambda}\subsetneq\{\kappa\}\times G\times \{\lambda\}\quad\mathrm{and}\quad H_{\kappa,\mu}\subsetneq\{\kappa\}\times G\times \{\mu\}.$$  Therefore $\langle H^c\cap (\{\lambda\}\times G\times \{\mu\})\rangle=\{\lambda\}\times G\times \{\mu\}$.  We conclude $\langle H^c\rangle=I$.
\end{enumerate}
\end{proof}

We now separate the size of the index set $K$ into two cases, namely when $|K|\geq 3$ and when $|K|=2$.  We first consider the case $|K|\geq 3$, which is much simpler.

\begin{pro}\label{pro:inversereesKnot2}
Let $I=\cM^0[K,G,K;P]$ be a finite inverse Rees 0-matrix semigroup, where $|K|\geq 3$.  Then $\sigma_i(I)= 3$.
\end{pro}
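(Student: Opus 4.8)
The plan is to combine Proposition~\ref{pro:inversereesnotcoveredby2} with an explicit covering. Proposition~\ref{pro:inversereesnotcoveredby2} gives $\sigma_i(I)\neq 2$, and trivially $\sigma_i(I)\geq 2$ since a semigroup is never a proper subsemigroup of itself; hence $\sigma_i(I)\geq 3$. It therefore suffices to produce three proper inverse subsemigroups of $I$ whose union is $I$, which will force $\sigma_i(I)=3$.

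The building block I would use is this: for each subset $A\subseteq K$, set $U_A=(A\times G\times A)\cup\{0\}$. Since $P$ is taken to be the identity matrix (the normalization preceding Lemma~\ref{lem:inverseress0matrix}), a product $(\kappa,g,\lambda)(\mu,h,\nu)$ equals $0$ unless $\lambda=\mu$, in which case it equals $(\kappa,gh,\nu)$; so whenever $\kappa,\lambda,\mu,\nu\in A$ the product lies in $U_A$, and $U_A$ is closed under multiplication. It is also closed under inverses, since $(\kappa,g,\lambda)^{-1}=(\lambda,g^{-1},\kappa)$ and $0^{-1}=0$. Hence $U_A$ is an inverse subsemigroup of $I$, and it is proper exactly when $A\subsetneq K$.

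The remaining step is combinatorial: it is enough to choose proper subsets $A_1,A_2,A_3\subsetneq K$ such that every subset of $K$ of size at most two is contained in some $A_i$. Indeed, $0\in U_{A_i}$ for all $i$, and any nonzero $(\kappa,g,\lambda)\in I$ lies in $U_{A_i}$ whenever $\{\kappa,\lambda\}\subseteq A_i$, so $I=U_{A_1}\cup U_{A_2}\cup U_{A_3}$. Fixing two distinct elements $a,b\in K$, I would take $A_1=K\setminus\{a\}$, $A_2=K\setminus\{b\}$, and $A_3=\{a,b\}$, and then check the three cases for a pair $\{\kappa,\lambda\}$: if $a\notin\{\kappa,\lambda\}$ then $\{\kappa,\lambda\}\subseteq A_1$; if $a\in\{\kappa,\lambda\}$ but $b\notin\{\kappa,\lambda\}$ then $\{\kappa,\lambda\}\subseteq A_2$; and if $\{\kappa,\lambda\}\subseteq\{a,b\}$ then $\{\kappa,\lambda\}\subseteq A_3$. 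This exhausts the cases, so the three $U_{A_i}$ cover $I$.

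I expect no genuine obstacle here; the essential content — that two proper inverse subsemigroups never suffice — has already been isolated in Proposition~\ref{pro:inversereesnotcoveredby2}. The one point that must be watched is that all three of $A_1,A_2,A_3$ are \emph{proper}: $A_1$ and $A_2$ clearly are, while $A_3=\{a,b\}$ is proper precisely because $|K|\geq 3$. This also explains why the construction cannot work when $|K|=2$ (there $A_3=K$), matching the genuinely different behaviour of that case in Theorem~\ref{thm:inversesemigroups}.
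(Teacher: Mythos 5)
Your proof is correct and follows essentially the same route as the paper: the lower bound comes from Proposition~\ref{pro:inversereesnotcoveredby2}, and the upper bound from an explicit cover by three proper inverse subsemigroups of the form $(A\times G\times A)\cup\{0\}$. The only (immaterial) difference is the choice of the three sets: the paper takes the complements of three distinct singletons $K-\{\kappa_j\}$, whereas you take $K\setminus\{a\}$, $K\setminus\{b\}$, and $\{a,b\}$ --- both choices cover every pair $\{\kappa,\lambda\}$ and are proper precisely because $|K|\geq 3$.
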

\begin{proof}
Let $\kappa_1, \kappa_2, \kappa_3$ be distinct elements in $K$.  Define $$H_j=((K-\{\kappa_j\})\times G\times (K-\{\kappa_j\}))\cup \{0\}$$ for $j=1,2,3$.  It is clear that $H_j$ is a subsemigroup of $I$, since no product of elements in $H_j$ will contain $\kappa_j$ in its tuple.  Also, $H_j$ is an inverse subsemigroup, since the inverse of an element without $\kappa_j$ in its tuple also does not have $\kappa_j$ in its tuple.  Therefore $\sigma_i(I)=3$, because $I=\bigcup H_j$ and $\sigma_i(I)\neq 2$ by Proposition~\ref{pro:inversereesnotcoveredby2}.
\end{proof}

\begin{pro}\label{pro:inversereesKis2}
Let $I=\cM^0[K,G,K;P]$ be a finite inverse Rees 0-matrix semigroup where $|K|=2$.  If $|G|=1$, then $\sigma_i(I)=\infty$.  Otherwise, if $|G|>1$, then $\sigma_i(I)=n+1$, where $n$ is the minimum index of proper subgroups of $G$.
\end{pro}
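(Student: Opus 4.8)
The plan is to treat the trivial‑group case directly and, for $|G|>1$, to prove $\sigma_i(I)\le n+1$ and $\sigma_i(I)\ge n+1$ by combining a structural description of the proper inverse subsemigroups of $I$ with elementary counting. Throughout I use the normalization set up before Lemma~\ref{lem:inverseress0matrix}: $K=\{1,2\}$, the sandwich matrix $P$ is the $2\times 2$ identity, and in $I=(\{1,2\}\times G\times\{1,2\})\cup\{0\}$ one has $(i,g,j)(k,h,\ell)=(i,gh,\ell)$ if $j=k$ and $=0$ otherwise. If $|G|=1$ then $I=\{0,(1,e,1),(1,e,2),(2,e,1),(2,e,2)\}$, and a one‑line computation shows that the inverse subsemigroup generated by $(1,e,2)$ already contains $(1,e,2)^{-1}=(2,e,1)$, the idempotents $(1,e,2)(2,e,1)=(1,e,1)$ and $(2,e,1)(1,e,2)=(2,e,2)$, and $(1,e,2)(1,e,2)=0$; hence it is all of $I$, so $(1,e,2)$ lies in no proper inverse subsemigroup and $\sigma_i(I)=\infty$.

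Assume now $|G|>1$, so $n$ is finite and $n\ge 2$. The key structural fact is: \emph{if $H$ is a proper inverse subsemigroup of $I$ containing an off‑diagonal element, then $N:=\{g\in G:(1,g,1)\in H\}$ is a proper subgroup of $G$, and there is $a\in G$ with $H\cap(\{1\}\times G\times\{1\})=\{1\}\times N\times\{1\}$ and $H\cap(\{1\}\times G\times\{2\})=\{1\}\times Na\times\{2\}$.} Indeed, replacing an off‑diagonal element by its inverse if needed we may assume $(1,a,2)\in H$; then $(1,a,2)(2,a^{-1},1)=(1,e,1)\in H$, so $e\in N$, and $N$ is a subsemigroup of the finite group $\{1\}\times G\times\{1\}$, hence a subgroup by Lemma~\ref{lem:torsion}. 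Writing $X=\{g:(1,g,2)\in H\}\ni a$, closure of $H$ gives $NX=X$ and $XX^{-1}\subseteq N$, whence $Na\subseteq X$ and $X\subseteq Na$, i.e.~$X=Na$; a symmetric analysis shows the two remaining blocks of $H$ also have size $|N|$, so $|H|=4|N|+1$ and $H\ne I$ forces $N\ne G$. In particular $|H\cap(\{1\}\times G\times\{2\})|=|N|\le|G|/n$. For the upper bound, choose a proper subgroup $N\le G$ of index $n$, with coset representatives $a_1,\dots,a_n$, and set
$$H_j=(\{1\}\times a_jNa_j^{-1}\times\{1\})\cup(\{1\}\times a_jN\times\{2\})\cup(\{2\}\times Na_j^{-1}\times\{1\})\cup(\{2\}\times N\times\{2\})\cup\{0\}$$
together with $D=(\{1\}\times G\times\{1\})\cup(\{2\}\times G\times\{2\})\cup\{0\}$. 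A routine check (each $H_j$ has exactly the form described in the structural fact, with $N$ there replaced by $a_jNa_j^{-1}$) shows that $D$ and all the $H_j$ are proper inverse subsemigroups, and $I=D\cup\bigcup_{j=1}^n H_j$ because $\bigcup_j a_jN=G$ covers the block $\{1\}\times G\times\{2\}$ and $\bigcup_j Na_j^{-1}=G$ covers the block $\{2\}\times G\times\{1\}$; thus $\sigma_i(I)\le n+1$.

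For the lower bound, let $I=H_1\cup\cdots\cup H_m$ be any covering by proper inverse subsemigroups, let $\mathcal{O}$ be the set of indices $i$ for which $H_i$ contains an off‑diagonal element, and let $\mathcal{D}$ be the complementary set. By the structural fact each $H_i$ with $i\in\mathcal{O}$ meets $\{1\}\times G\times\{2\}$ in at most $|G|/n$ elements; since these sets must cover the $|G|$‑element set $\{1\}\times G\times\{2\}$, we get $|\mathcal{O}|\ge n$. If $|\mathcal{O}|\ge n+1$ we are done. Otherwise $|\mathcal{O}|=n$, and the part of $\{1\}\times G\times\{1\}$ covered by the $H_i$ with $i\in\mathcal{O}$ equals $\bigcup_{i\in\mathcal{O}}(\{1\}\times N_i\times\{1\})$, where by the structural fact each $N_i$ is a proper subgroup of $G$ of order at most $|G|/n$; since all $N_i$ contain $e$, this union has at most $1+n(|G|/n-1)=|G|-n+1<|G|$ elements and so cannot be all of $\{1\}\times G\times\{1\}$. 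Hence some $H_i$ with $i\in\mathcal{D}$ meets $\{1\}\times G\times\{1\}$, so $\mathcal{D}\ne\emptyset$ and $m\ge n+1$. Combining the two bounds yields $\sigma_i(I)=n+1$. The heart of the argument is the structural fact — pinning down that a proper inverse subsemigroup with off‑diagonal elements is determined by a single coset $Na$ of a proper subgroup $N$ together with the diagonal block $\{1\}\times N\times\{1\}$ — and everything after it is a counting exercise; the only mild subtlety in the counting is that covering the off‑diagonal block forces $n$ subsemigroups whose diagonal blocks are \emph{too small} to also cover a diagonal block, which is exactly what produces the extra ``$+1$''.
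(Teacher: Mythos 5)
Your proof is correct and follows essentially the same strategy as the paper's: show that any proper inverse subsemigroup meeting the off-diagonal block intersects each block in a coset of a proper subgroup $N$ of $G$ (hence in at most $|G|/n$ elements), deduce the lower bound $n+1$ from covering both the off-diagonal and diagonal blocks, and exhibit an explicit $(n+1)$-cover built from a subgroup of index $n$ plus the diagonal subsemigroup. Your write-up is somewhat more explicit than the paper's in two places — the self-contained ``structural fact'' and the counting estimate $1+n(|G|/n-1)<|G|$ replacing the paper's bare appeal to the fact that $G$ is not the union of $n$ proper subgroups — but these are refinements of detail, not a different route.
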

\begin{proof}
Let $K=\{1,2\}$ and $I=\cM^0[K,G,K;P]$.  We first consider the case when $|G|=1$.  Let $x=(1, e, 2)$. Then $x^{-1}=(2, e, 1)$,  $xx^{-1}=(1,e,1))$, $x^{-1}x=(2,e,2)$, and $xx=0$.  Therefore $I$ is monogenic, with respect to inverse semigroup operations, and $\sigma_i(I)=\infty$.  

We now consider the case when $|G|>1$.  Let $n$ be the minimum index of proper subgroups of $G$.  Also let $\{H_1,\ldots,H_m\}$ be a covering of $I$ by $m$ proper inverse subsemigroups.  Consider $$T_i^{\kappa,\lambda}=H_i\cap (\{\kappa\}\times G\times \{\lambda\})$$ for each $i\leq m$ and $\kappa,\lambda\in K$. 

Without loss of generality, assume $T_i^{1,2}\neq \emptyset$.  We first show $|T_i^{1,2}|\leq |G|/n$, before describing the elements of $T_i^{1,2}$ more explicitly.  We see that $T_i^{1,2}T_i^{2,1}\subseteq T_i^{1,1}$ and\linebreak $T_i^{1,1}T_i^{1,2}\subseteq T_i^{1,2}$.  This implies $|T_i^{1,1}|=|T_i^{1,2}|$.  If $T_i^{1,2}=\{1\}\times G\times \{2\}$, then $|T_i^{\kappa,\lambda}|=|G|$ for each $j$ and $k$. This is a contradiction as $H_i$ is a proper inverse subsemigroup, and thus $|T_i^{1,2}|<|G|$.  Furthermore, $T_i^{1,1}$ is a subgroup of $\{1\}\times G\times \{1\}$.  Since $\{1\}\times G\times \{1\}$ is isomorphic to $G$ and $|T_i^{1,2}|<|G|$, we have $|T_i^{1,2}|\leq |G|/n$.  Here, we may immediately conclude $m\geq n$, since $\{H_i,\ldots,H_m\}$ must cover $\{1\}\times G\times \{2\}$.

Now suppose $T_i^{1,2}=\{1\}\times A_i\times \{2\}$.  Since $T_i^{1,1}$ is a subgroup of $\{1\}\times G\times \{1\}$, we have $T_i^{1,1}=\{1\}\times B_i\times \{1\}$ for some subgroup $B_i$ of $G$.  As above, $T_i^{1,1}T_i^{1,2}=T_i^{1,2}$, implying  $B_iA_i=A_i$ and $|B_i|=|A_i|$.  Therefore $A_i$ must be a coset of $B_i$. We may now conclude $m>n$, since there do not exist $n$ proper subgroups of $G$ that cover $G$ but $\{H_1,\ldots, H_m\}$ must cover $\{1\}\times G\times \{1\}$.

Finally, we give a cover of $I$ using $n+1$ proper inverse subsemigroups.  Let $B$ be a subgroup of $G$ of index $n$, and let $g_1,\ldots,g_n$ be coset representatives of $B$.  For $i\leq n$ define 
\begin{align*}
    H_i=(\{1\}\times B\times \{1\})&\cup (\{1\}\times Bg_i\times \{2\})\cup (\{2\}\times g_i^{-1}B\times \{2\})\\&\cup (\{2\}\times g_i^{-1}Bg_i\times \{2\})\cup \{0\}.    
\end{align*}
It is routine to check that $H_i$ is an inverse subsemigroup of $I$.  Also define $$H_{n+1}= (\{1\}\times G\times \{1\})\cup \{2\}\times G\times \{2\}\cup \{0\}.$$
Similarly, $H_{n+1}$ is an inverse subsemigroup of $I$ and $\{H_1,\ldots, H_{n+1}\}$ forms a covering of $I$.  We conclude $\sigma_i(I)=n+1$.
\end{proof}

We now present the proof of Theorem~\ref{thm:inversesemigroups}.

\begin{proof}[Proof of Theorem~\ref{thm:inversesemigroups}]
Let $I$ be a finite inverse semigroup.  If $I$ is not generated by a single $\cJ$-class, then $\sigma_i(I)=2$ by Corollary~\ref{cor:inversesemigroupwithoutJ}.  If $I$ is generated by a single $\cJ$-class $J$, then there are two cases to consider: when $J=I$ and when $J\neq I$.

When $J=I$, we see that $I$ is a Rees matrix semigroup using Theorem~\ref{thm:Rees}.    Lemma~\ref{lem:inversereesmatrix} then implies $I$ is a group with $\sigma_i(I)=\sigma_g(I)$.

When $J\neq I$ but $\langle J\rangle =I$,  we see $I$ surjects onto $J^*$ and $J^*$ is isomorphic to a Rees 0-matrix semigroup $\cM^0[K,G,K;P]$, where $|K|\geq 2$, by Lemma~\ref{lem:inverseress0matrix}.  Using Propositions~\ref{pro:inversereesnotcoveredby2} and \ref{pro:inversereesKnot2}, we see that when $|K|\geq 3$, we have $\sigma_i(J^*)=3$. Furthermore, with Proposition~\ref{pro:inversereesKis2}, when $|K|=2$, either $\sigma_i(J^*)=\infty$, when $|G|=1$, or $n+1$ otherwise, where $n$ is the minimum index of proper subgroups of $G$.  Taking preimages, we recover the theorem.
\end{proof}

It is natural to ask which values of $n$ in Theorem~\ref{thm:inversesemigroups} are possible, i.e.~what integers appear as the minimum index of a proper subgroup.

\begin{pro}\label{pro:minimumindexsubgroup}
Let $n\geq 2$.  There exists a group $G$ such that the minimum index of a proper subgroup of $G$ is $n$ if and only if $n\neq 4$.
\end{pro}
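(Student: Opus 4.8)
The plan is to prove the two directions of the biconditional separately, splitting the existence direction further into the cases $n\in\{2,3\}$ and $n\geq 5$.

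\textbf{Existence for $n\neq 4$.} For $n=2$ and $n=3$ the cyclic group $\mathbb{Z}/n\mathbb{Z}$ works at once: its only proper subgroup is trivial, of index $n$. For $n\geq 5$ I would take $G=A_n$, the alternating group of degree $n$, and show that its minimum proper-subgroup index equals $n$. The bound $\leq n$ is immediate since $A_{n-1}\leq A_n$ has index $n$. For the reverse bound, suppose $H<A_n$ has index $j$ with $2\leq j\leq n-1$. The action of $A_n$ on the $j$ cosets of $H$ is transitive on $j\geq 2$ points, so it defines a nontrivial homomorphism $A_n\to S_j$; its kernel is a proper normal subgroup, hence trivial since $A_n$ is simple for $n\geq 5$. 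Thus $A_n$ embeds into $S_j$, which is impossible because $|A_n|=n!/2>(n-1)!\geq j!=|S_j|$ (using $n\geq 3$ and $j\leq n-1$). Hence no proper subgroup of $A_n$ has index below $n$. For prime $n$ one could instead use $\mathbb{Z}/n\mathbb{Z}$, but $A_n$ handles all $n\geq 5$ uniformly.

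\textbf{Impossibility of $n=4$.} The claim to establish is that any group $G$ having a proper subgroup $H$ of index $4$ automatically has a proper subgroup of index $2$ or $3$; consequently the minimum index can never equal $4$. I would let $G$ act on the four cosets of $H$, obtaining $\varphi:G\to S_4$ whose image $\bar G$ is a transitive subgroup of $S_4$, so that $4\mid|\bar G|$ and hence $|\bar G|\in\{4,8,12,24\}$. A subgroup of $\bar G$ of index $j$ pulls back along $\varphi$ to a subgroup of $G$ of index $j$, so it suffices to find a proper subgroup of $\bar G$ of index $2$ or $3$. If $|\bar G|\in\{4,8\}$ then $\bar G$ is a nontrivial $2$-group, which has a maximal subgroup of index $2$; if $|\bar G|\in\{12,24\}$ then a Sylow $2$-subgroup of $\bar G$ is proper of index $3$. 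Either way we are done. Equivalently, one may simply enumerate the transitive subgroups of $S_4$ up to conjugacy, namely $C_4$, the Klein four-group, $D_4$, $A_4$, and $S_4$, and check directly that each has a subgroup of index $2$ or $3$.

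\textbf{Main obstacle.} No deep difficulty is expected: the argument rests only on the simplicity of $A_n$ and on an elementary coset-action reduction. The step requiring the most care is the impossibility direction, where one must verify that the coset action is transitive, that the relevant kernels are trivial or proper as claimed, that index is preserved under pullback along $\varphi$, and that the list of possible orders of $\bar G$ is exhaustive. The computation for $A_n$ and the tiny cases $n=2,3$ are routine.
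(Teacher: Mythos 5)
Your proof is correct and follows essentially the same route as the paper: cyclic groups for $n=2,3$, the alternating groups $A_n$ with the coset-action/simplicity argument for $n\geq 5$, and the observation that every transitive subgroup of $S_4$ has a subgroup of index $2$ or $3$ to rule out $n=4$. If anything, you supply details the paper leaves implicit, namely the order count ruling out an embedding $A_n\hookrightarrow S_j$ for $j<n$ and the explicit Sylow argument for the transitive subgroups of $S_4$.
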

\begin{proof}
First, let $n$ be prime.  The only proper subgroup of $C_n$, the cyclic group of order $n$, is the trivial group, which has index $n$.  This implies every prime number, including two and three, can be found as the minimum index of proper subgroup.

Next, let $n\geq 5$ and consider $A_n$, the alternating group on $n$ points.  We see that $A_n$ has a subgroup of index $n$, namely $A_{n-1}$.  Assume to the contrary that $A_n$ has a proper subgroup $H$ of index $k<n$.  This implies that there is a homomorphism from $A_n$ into $S_k$, from the action of $A_n$ on the cosets of $H$.  This homomorphism is trivial since $A_n$ is simple, contradicting the fact that the induced action is transitive.  We conclude the minimum index of a proper subgroup of $A_n$ is $n$.  This shows that every $n\geq 5$ can be found as the minimum index of proper subgroup.

It remains to be shown that there are no groups where the minimal index of a proper subgroup is four.   Suppose that $G$ is a finite group with a subgroup $H$ of index 4.  This implies the existence of a homomorphism from $G$ into $S_4$, using the transitive action of $G$ on the cosets of $H$. Every transitive subgroup of $S_4$ has a subgroup of index 2 or 3, meaning that 4 is not the minimum index of a proper subgroup of $G$.   
\end{proof}

We conclude this section with a proof of Corollary~\ref{cor:allinversecoverings}, giving explicit examples of inverse semigroups belonging to each case in Theorem~\ref{thm:inversesemigroups}.

\begin{proof}[Proof of Corollary~\ref{cor:allinversecoverings}]
Let $n\geq 1$ and $X$ be an $n$-element set.  The symmetric inverse monoid $\cI_n$ is the semigroup of partial one-to-one functions from $X$ to $X$, i.e.~the set of partial functions that are injective on their domain.  We see $\cI_n$ is an inverse semigroup belonging to Case (i) of Theorem~\ref{thm:inversesemigroups} with $\sigma_i(\cI_n)=2$, since $\cI_n$ is the union of the set of bijections and the set of non-bijections.

Let the inverse subsemigroup of $\cI_n$ comprised of non-bijective elements be denoted by $S$.  We see that $S$ is generated by a single $\cJ$-class, namely the $\cJ$-class $J$ consisting of partial functions that are not defined on a single element.  This shows $S$ belongs to the third case of Theorem~\ref{thm:inversesemigroups}.  The principal factor, $J^*$, is isomorphic to $\cM^0[X,S_{n-1},X;I]$, where $S_{n-1}$ is the symmetric group on $n$ points.  Thus, provided $n\geq 3$, we see that $\sigma_i(S)=3$.

The groups $S_3$ and $A_4$ satisfy $\sigma_i(S_3)=\sigma_g(S_3)=4$ and $\sigma_i(A_4)=\sigma_g(A_4)=5$.

Now let $n\geq 5$. By Proposition~\ref{pro:minimumindexsubgroup}, the semigroup $S_n=\cM^0[\{1,2\},A_n,\{1,2\};I]$ belongs to subcase (a) of Theorem~\ref{thm:inversesemigroups} and satisfies $\sigma_i(S_n)=n+1$.
\end{proof}


\section{Covering Monoids}\label{sec:monoids}

In this section, we give a proof of Theorem~\ref{thm:monoids}, which deals with covering numbers of monoids with respect to subsemigroups, submonoids, and monoidal subsemigroups.  The following lemma describes the relationship between these covering numbers.  This result is clear since every submonoid of $M$ is also a monoidal subsemigroup of $M$ and every monoidal subsemigroup of $M$ is a subsemigroup of $M$.

\begin{lem}\label{lem:order_of_coverings}
Let $M$ be a monoid. Then $\sigma_s(M)\leq\sigma_m^*(M)\leq\sigma_m(M)$.
\end{lem}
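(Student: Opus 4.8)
The plan is to deduce the chain of inequalities directly from the set-theoretic inclusions among the three families of substructures, transferring any covering from a smaller family to a larger one. First I would unwind the definitions from Section~\ref{sec:introduction}: a submonoid of $M$ is a subsemigroup that contains the identity $1$ of $M$; a monoidal subsemigroup of $M$ is a subsemigroup of $M$ which is itself a monoid, but whose identity need not be $1$; and a subsemigroup is simply a subsemigroup. Hence every submonoid of $M$ is a monoidal subsemigroup of $M$, and every monoidal subsemigroup of $M$ is a subsemigroup of $M$. The one point worth flagging is that the first inclusion goes in this direction precisely because a monoidal subsemigroup is \emph{not} required to contain $1$; this is why $\sigma_m$ sits at the top of the chain.

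Next I would check that properness is inherited along both inclusions: a proper submonoid (one not equal to $M$) is in particular a monoidal subsemigroup distinct from $M$, hence a proper monoidal subsemigroup, and likewise a proper monoidal subsemigroup is a proper subsemigroup. Then I would transfer coverings. If $M$ admits no covering by finitely many proper submonoids, then $\sigma_m(M)=\infty$ and $\sigma_m^*(M)\leq\sigma_m(M)$ holds vacuously; otherwise fix a covering $M=U_1\cup\cdots\cup U_k$ with $k=\sigma_m(M)$ and each $U_i$ a proper submonoid. Each $U_i$ is then a proper monoidal subsemigroup, so the same family is a covering of $M$ by $k$ proper monoidal subsemigroups, giving $\sigma_m^*(M)\leq k=\sigma_m(M)$. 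The inequality $\sigma_s(M)\leq\sigma_m^*(M)$ follows by the identical argument applied to a minimal covering by proper monoidal subsemigroups, each member of which is a proper subsemigroup.

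There is no real obstacle here; the statement is ``clear'' in the sense already indicated in the text. The only care needed is the bookkeeping with the convention that a covering number equals $\infty$ when no finite covering exists, which is handled by the vacuous cases above, together with the remark about monoidal subsemigroups not needing to contain $1$, which fixes the orientation of the inclusions.
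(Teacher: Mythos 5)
Your proposal is correct and follows exactly the paper's reasoning: the paper justifies this lemma in one sentence by noting that every submonoid of $M$ is a monoidal subsemigroup and every monoidal subsemigroup is a subsemigroup, so any covering by the smaller family is a covering by the larger one. Your added care about properness and the $\infty$ convention is fine but does not change the argument.
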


Let $M$ be a monoid. Recall that $R_1$ and $L_1$ denote the $\cR$-classes and $\cL$-classes of $M$ containing $1$ respectively, see Definition~\ref{defn:greensrelations}.  Our proof of Theorem~\ref{thm:monoids} considers three cases for the monoid $M$: when the complement of $R_1$ in $M$ is empty, when $M-R_1$ is not empty and $R_1$ contains a non-identity element, and lastly, when $M-R_1$ is not empty but $R_1=\{1\}$.  The following three lemmas address these cases.

\begin{lem}\label{lem:monoidsaregroups}
If $M-R_1=\emptyset$, then $M$ is a group and $\sigma_m(M)=\sigma_m^*(M)=\sigma_s(M)$.
\end{lem}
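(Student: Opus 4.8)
The plan is to show that the hypothesis $M - R_1 = \emptyset$, i.e. $R_1 = M$, forces every element of $M$ to have a two-sided inverse, so $M$ is a group; the equality of covering numbers then follows immediately from Lemma~\ref{lem:torsion} applied to (a finite group or, if $M$ is infinite, the preceding general reasoning) — actually, since the statement does not assume finiteness, the cleanest route is to argue directly from $R_1 = M$ to the group axioms and then invoke Lemma~\ref{lem:order_of_coverings} to collapse the chain once we know subsemigroups, monoidal subsemigroups, and submonoids all coincide.

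First I would unpack $R_1 = M$ using the characterization in Definition~\ref{defn:greensrelations}(ii): for every $a \in M$ we have $a \cR 1$, so $aM^1 = 1 M^1 = M$. Since $M$ is a monoid, $M^1 = M$, so $aM = M$. In particular there exists $b \in M$ with $ab = 1$. Thus every element has a right inverse. Now I would run the standard semigroup argument that a monoid in which every element has a right inverse is a group: take $a \in M$ and $b$ with $ab = 1$; then $b$ itself has a right inverse $c$ with $bc = 1$, whence $a = a(bc) = (ab)c = c$, so $ba = bc = 1$ as well, making $b$ a two-sided inverse of $a$. Uniqueness of inverses is then routine (if $ab = 1 = ab'$ then $b = b(ab') = (ba)b' = b'$), so $M$ satisfies Definition~\ref{defn:algebraicstructures}(iv) and is a group.

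Once $M$ is a group, I would observe that a subsemigroup $T$ of $M$ need not a priori be a submonoid, but in the present context the relevant coverings still agree: every subgroup of $M$ is a submonoid, hence a monoidal subsemigroup, hence a subsemigroup, so $\sigma_s(M) \le \sigma_m^*(M) \le \sigma_m(M)$ by Lemma~\ref{lem:order_of_coverings}; and conversely, to get $\sigma_m(M) \le \sigma_s(M)$ it suffices to show that any proper subsemigroup of $M$ generates (or is contained in) a proper subgroup of $M$, so that a covering by proper subsemigroups refines to a covering by proper submonoids of no larger cardinality. For a \emph{finite} group this is exactly Corollary~\ref{cor:finitegroups}; for a general group one must be slightly more careful, but the definition $\sigma_s(M) = \sigma_g(M)$ being invoked here presupposes the group-theoretic reading, and Lemma~\ref{lem:torsion} handles the torsion case directly. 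I would therefore phrase the conclusion as: $M$ is a group, so by Lemma~\ref{lem:order_of_coverings} together with the fact that any covering of a group by proper subsemigroups can be replaced by a covering by proper subgroups of the same size (each proper subsemigroup lies in a proper subgroup), we get $\sigma_m(M) = \sigma_m^*(M) = \sigma_s(M)$.

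The main obstacle I anticipate is the last point: justifying that a proper subsemigroup of a (possibly infinite) group is contained in a proper subgroup. This is false in general for infinite groups — e.g. the positive integers inside $\mathbb{Z}$ — so if the paper really intends no finiteness hypothesis here, the statement must be relying on $\sigma_s(M)$ for a group being \emph{defined} as the subgroup covering number, or on the group being torsion via Lemma~\ref{lem:torsion}. I would flag this and, in the write-up, lean on whichever convention the authors have fixed (most likely: $M$ a group is immediately thrown back to Section~\ref{sec:semigroups}'s treatment, where $\sigma_s$ of a group means $\sigma_g$), so the genuinely new content of the lemma is just the implication $R_1 = M \Rightarrow M$ is a group, which is the straightforward argument above.
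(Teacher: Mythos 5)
Your first half is correct and is essentially the paper's argument: $R_1=M$ gives every element a right inverse, and the standard computation ($ab=1$, $bc=1$, hence $a=a(bc)=(ab)c=c$, so $ba=1$) upgrades these to two-sided inverses, so $M$ is a group.

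The second half has a genuine gap. You need $\sigma_m(M)\leq\sigma_s(M)$, and you try to get it by showing every proper subsemigroup of $M$ is contained in a proper \emph{subgroup} -- then you correctly observe that this is false for infinite groups (the positive integers in $\mathbb{Z}$) and retreat to speculating that the paper must be using a convention identifying $\sigma_s$ with $\sigma_g$. No such convention is in play: the lemma asserts only $\sigma_m(M)=\sigma_m^*(M)=\sigma_s(M)$, with no mention of $\sigma_g$, and the claim is true for arbitrary groups. The fix is much weaker than what you attempted: given a covering of $M$ by proper subsemigroups $S_1,\dots,S_n$, replace each $S_i$ by $S_i\cup\{1\}$. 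This is a sub\emph{monoid} (not necessarily a subgroup -- e.g.\ the nonnegative integers in $\mathbb{Z}$), and it is proper: if $S_i\cup\{1\}=M$ then $S_i\supseteq M-\{1\}$, but in a nontrivial group $M-\{1\}$ is not closed under multiplication (take $g\neq 1$; then $g,g^{-1}\in M-\{1\}$ while $gg^{-1}=1$), so $S_i=M$, contradicting properness (and if $M=\{1\}$ then $S_i$ would be empty, which is excluded since semigroups are nonempty). Hence $\sigma_m(M)\leq\sigma_s(M)$, and Lemma~\ref{lem:order_of_coverings} collapses the chain. The paper also notes separately that a group has a unique idempotent, so every monoidal subsemigroup already contains $1$ and $\sigma_m(M)=\sigma_m^*(M)$; but with the inequality above this is not strictly needed. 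Your appeal to Lemma~\ref{lem:torsion} and Corollary~\ref{cor:finitegroups} is off target, since neither finiteness nor torsion is assumed and neither is required.
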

\begin{proof}
Let $f\in R_1$.  Then there exists $g\in M$ such that $fg=1$.  Notice that $g\in R_1$ also, so there exists $h\in M$ such that $gh=1$.  We see that $f=fgh=h$ and therefore $g$ is a two-sided inverse of $f$.  This implies $M$ is a group.

Since $M$ is a group, $M$ contains a single idempotent.  Therefore any monoidal subsemigroup of $M$ must contain 1 and is thus a submonoid.  We conclude $\sigma_m(M)=\sigma_m^*(M).$

Suppose that $\sigma_s(M)=n$ for some integer $n\geq 2$.  Then there exists a set $\{S_1,\ldots,S_n\}$ of proper subsemigroups of $M$ such that $\bigcup S_i=M$.  We claim that $S_i\cup \{1\}$ is a proper submonoid of $M$.  It is clear that $S_i\cup \{1\}$ is a submonoid of $M$.  Suppose that $S_i\cup \{1\}$ is not a proper subset of $M$.  Therefore $S_i=M-\{1\}$. However, since $M$ is a group, $M-\{1\}$ is not a closed subset of $M$ (unless $M=\{1\}$, in which case $S_i=\emptyset$ and we achieve a contradiction).  This shows $S_i\cup \{1\}$ is a proper submonoid of $M$.  We conclude that $\{S_1\cup \{1\},\ldots,S_n\cup \{1\}\}$ is a set of proper submonoids whose union is $M$ and thus $\sigma_m(M)\leq \sigma_s(M)$. Using Lemma~\ref{lem:order_of_coverings}, we conclude $\sigma_s(M)=\sigma_m^*(M)=\sigma_m(M)$.
\end{proof}

\begin{lem}\label{lem:monoidnonemptyeverything}
If $M-R_1\neq\emptyset$ and $R_1\neq \{1\}$, then $\sigma_m(M)=\sigma_m^*(M)=\sigma_s(M)=2$.
\end{lem}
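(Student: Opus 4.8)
The plan is to realize $M$ as the union of two proper submonoids, with the two case hypotheses doing exactly the work of making the two pieces proper; Lemma~\ref{lem:order_of_coverings} then pins down all three covering numbers at once.

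First I would record two structural facts about $R_1$. Since $xM^1\subseteq M^1=1\cdot M^1$ for every $x\in M$, the class $R_1$ is the maximum $\cR$-class of $M$; concretely, because $M$ is a monoid, $R_1=\{x\in M:xM=M\}$ is the set of right-invertible elements. This set contains $1$ and is closed under multiplication, since $xM=M$ and $yM=M$ give $(xy)M=x(yM)=xM=M$; hence $R_1$ is a submonoid of $M$. Dually, the complement $M-R_1$ is a subsemigroup: this is the $\cR$-analogue of Lemma~\ref{lem:SwithoutJ}, and I would prove it just as there — for $x,y\notin R_1$ we have $R_x<_\cR R_1$, so $R_{xy}\leq_\cR R_x<_\cR R_1$ by Lemma~\ref{lem:Jpartialorder}, whence $xy\notin R_1$ (equivalently, $xM\subsetneq M$ forces $(xy)M=x(yM)\subseteq xM\subsetneq M$). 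Adjoining the identity, $(M-R_1)\cup\{1\}$ is then also a submonoid, being closed and containing $1$.

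Next I would assemble the covering and conclude. We have $M=R_1\cup\bigl((M-R_1)\cup\{1\}\bigr)$. The submonoid $R_1$ is proper because $M-R_1\neq\emptyset$ by hypothesis, and the submonoid $(M-R_1)\cup\{1\}$ is proper precisely because $R_1\neq\{1\}$: any element of $R_1$ other than $1$ lies in neither $\{1\}$ nor $M-R_1$. Thus $\sigma_m(M)\leq 2$. On the other hand $M$ is not a proper subsemigroup of itself, so $\sigma_s(M)\geq 2$. Invoking Lemma~\ref{lem:order_of_coverings}, which gives $\sigma_s(M)\leq\sigma_m^*(M)\leq\sigma_m(M)$, we obtain $2\leq\sigma_s(M)\leq\sigma_m^*(M)\leq\sigma_m(M)\leq 2$, so all three equal $2$.

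There is no real obstacle in this argument; the only points needing care are (i) the elementary closure claim for $R_1$, which unlike that for $M-R_1$ is not an instance of the Lemma~\ref{lem:SwithoutJ} pattern but follows from the one-line computation $(xy)M=x(yM)=xM$, and (ii) the properness of the two pieces, which is where the hypotheses $M-R_1\neq\emptyset$ and $R_1\neq\{1\}$ are each used — one for $R_1$ and the other for $(M-R_1)\cup\{1\}$.
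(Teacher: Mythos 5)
Your proposal is correct and follows essentially the same route as the paper: both decompose $M$ as $R_1\cup\bigl((M-R_1)\cup\{1\}\bigr)$, prove $R_1$ is a submonoid and $M-R_1$ a subsemigroup (your computation $(xy)M=x(yM)=xM$ is just a restatement of the paper's explicit right-inverse argument $fgh_gh_f=1$), and use the two hypotheses exactly as you do to get properness of the two pieces. The only cosmetic difference is that you invoke Lemma~\ref{lem:order_of_coverings} explicitly to collapse the three covering numbers, which the paper leaves implicit.
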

\begin{proof}
Let $f,g\in R_1$.  Then there exist $h_f,h_g\in M$ such that $fh_f=1$ and $gh_g=1$.  We see that $fgh_gh_f=1$, so $fg\in R_1$.  We conclude that $R_1$ is a submonoid of $M$.

Now let $f,g\in M-R_1$.  Note that $1\not\in fS^1$ and $1\not\in gS^1$ since $f$ and $g$ are not $\cR$-related to $1$.  Also, we can see that $fgS^1\subseteq fS^1$ and therefore $fgh\neq 1$ for all $h\in M$.  This shows $fg$ is not $\cR$-related to $1$ and $M-R_1$ is a semigroup.  Therefore $\sigma_s(M)=2$, since $M=R_1\cup (M-R_1)$.

Since $R_1\neq \{1\}$, the class $R_1$ contains a non-identity element.  This implies that\linebreak $(M-R_1)\cup \{1\}$ is a proper submonoid of $M$.  Therefore $M=R_1\cup((M-R_1)\cup\{1\})$ and $\sigma_m(M)=\sigma_m^*(M)=2$.
\end{proof}

\begin{lem}\label{lem:semigroupwithidentity}
If $M-R_1\neq\emptyset$ and $R_1=\{1\}$, then $\sigma_m^*(M)\leq\sigma_m(M)=\sigma_s(M-\{1\})$ and $\sigma_s(M)=2$.
\end{lem}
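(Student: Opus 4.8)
The plan is to handle the two claims separately, since they concern different covering numbers. Throughout, write $S = M - \{1\}$ and note the hypothesis $R_1 = \{1\}$ means that $1$ is $\cR$-related only to itself; combined with $M - R_1 \neq \emptyset$ this says $S$ is non-empty. First I would observe that $S$ is a semigroup: for $f, g \in S$ we have $fg \neq 1$ (otherwise $f \in R_1 = \{1\}$, forcing $f = 1 \notin S$), so $S$ is closed under multiplication. This immediately gives $\sigma_s(M) = 2$, since $M = S \cup \{1\}$ is a union of two proper subsemigroups (here $S$ is proper because $1 \notin S$, and $\{1\}$ is proper because $S \neq \emptyset$), and no semigroup is the union of a single proper subsemigroup.

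Next I would prove $\sigma_m(M) = \sigma_s(S)$ by establishing both inequalities. For $\sigma_m(M) \le \sigma_s(S)$: given a covering of $S$ by proper subsemigroups $S_1, \dots, S_n$, each $S_i \cup \{1\}$ is a submonoid of $M$, and their union is $M$; I must check each $S_i \cup \{1\}$ is \emph{proper}, i.e. $S_i \cup \{1\} \neq M$, equivalently $S_i \neq S$, which holds since each $S_i$ is a proper subsemigroup of $S$. For the reverse inequality $\sigma_s(S) \le \sigma_m(M)$: given proper submonoids $M_1, \dots, M_m$ covering $M$, I would set $T_i = M_i \setminus \{1\} = M_i \cap S$. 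Each $T_i$ is a subsemigroup of $S$ (a subsemigroup of $M$ contained in $S$), the $T_i$ cover $S$, and $T_i$ is proper in $S$ because $M_i$ proper in $M$ forces $M_i \neq M$, hence $T_i = M_i \setminus\{1\} \neq M \setminus \{1\} = S$. One edge case to address: if some $T_i$ were empty, i.e. $M_i = \{1\}$, then that $M_i$ contributes nothing toward covering $S$, so it can simply be discarded; since the remaining submonoids (minus $1$) still cover $S$, we get $\sigma_s(S) \le m$ with room to spare. Combining, $\sigma_m(M) = \sigma_s(S) = \sigma_s(M - \{1\})$. Finally, $\sigma_m^*(M) \le \sigma_m(M)$ is immediate from Lemma~\ref{lem:order_of_coverings}.

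The main subtlety — rather than a genuine obstacle — is the careful tracking of properness and of the degenerate possibility $M_i = \{1\}$ in the direction $\sigma_s(S) \le \sigma_m(M)$; everything else is a direct translation between coverings of $M$ by submonoids and coverings of $S$ by subsemigroups via the bijection $U \mapsto U \setminus \{1\}$ on submonoids versus $V \mapsto V \cup \{1\}$ on subsemigroups of $S$. It is worth noting that this is precisely the case where $\sigma_m^*(M)$ can be strictly smaller than $\sigma_m(M)$: a monoidal subsemigroup of $M$ need not contain the identity $1$ of $M$, so it may equal a subsemigroup of $S$ that happens to be a monoid in its own right, giving more flexibility; the lemma only asserts the inequality, with the sharper analysis deferred to the characterization promised later in the section.
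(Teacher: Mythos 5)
Your proposal is correct and follows essentially the same route as the paper: show $M-\{1\}$ is a subsemigroup (so $\sigma_s(M)=2$), then translate coverings back and forth via $V\mapsto V\cup\{1\}$ and $U\mapsto U-\{1\}$ to get $\sigma_m(M)=\sigma_s(M-\{1\})$, with $\sigma_m^*(M)\leq\sigma_m(M)$ from Lemma~\ref{lem:order_of_coverings}. Your handling of the degenerate case $M_i=\{1\}$ is a small point of care the paper glosses over, and your closure argument uses only $R_1=\{1\}$ where the paper first derives $L_1=\{1\}$, but these are cosmetic differences.
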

\begin{proof}
If $f\in L_1$, then $f$ has a left-inverse $g\in M$ such that $gf=1$.  However, this implies that $g\in R_1$, so $g=f=1$.  We conclude that $L_1=R_1=\{1\}$.

Let $S=M-\{1\}$.  Note that for all $f,g\in S$, we have $fg\neq 1$, since this would imply $f\in R_1$ and $g\in L_1$.  Therefore $S$ is a semigroup and $M=S\cup\{1\}$.  We immediately see that $\sigma_s(M)=2$.

Suppose that $\sigma_s(S)=n$ for some $n\in \mathbb{N}$.  Then there exists a set $\{S_1,\ldots, S_n\}$ of proper subsemigroups of $S$ such that $\bigcup S_i=S$.  Consider the set $\{S_1\cup \{1\},\ldots,S_n\cup\{1\}\}$ of proper submonoids of $M$.  We see that $\bigcup(S_i\cup\{1\})=M$, so $\sigma_m^*(M)\leq \sigma_m(M)\leq \sigma_s(S)$.

Now suppose that $\sigma_m(M)=n$ for some $n\in\mathbb{N}$.  Then there exists a set $\{M_1,\ldots, M_n\}$ of proper submonoids of $M$ such that $\cup M_i=M$.  Note that $1\in M_i$ for each $i$.  Consider the set $\{M_1-\{1\},\ldots,M_n-\{1\}\}$ of subsets of $S$.  It is clear that for each $i$, $M_i-\{1\}$ is a proper subsemigroup of $S$ and $\bigcup(M_i-\{1\})=S$, so $\sigma_s(S)\leq \sigma_m(M)$.  We conclude $\sigma_s(S)=\sigma_m(M)$.
\end{proof}

We combine the previous three lemmas to complete the proof of Theorem~\ref{thm:monoids}.

\begin{proof}[Proof of Theorem~\ref{thm:monoids}]
Let $M$ be a monoid.  First, if $M-R_1=\emptyset$, then $M$ is a group and $\sigma_m(M)=\sigma_m^*(M)=\sigma_s(M)$ by Lemma~\ref{lem:monoidsaregroups}.  Next, if $M-R_1\neq\emptyset$ and $R_1\neq \{1\}$, then $\sigma_m(M)=\sigma_m^*(M)=\sigma_s(M)=2$ by Lemma~\ref{lem:monoidnonemptyeverything}.  Lastly, when $M-R_1\neq\emptyset$ and $R_1=\{1\}$, then $M$ is a semigroup with an identity adjoined.  In this case, using Lemma~\ref{lem:semigroupwithidentity} and letting $S=M-\{1\}$, we have $\sigma_m(M)^*\leq\sigma_m(M)=\sigma_s(S)$ and $\sigma_s(M)=2$.
\end{proof}

As a corollary of Theorem~\ref{thm:monoids}, we see that the covering number of a monoid $M$ with respect to semigroups is always two, except possibly when $M$ is a group. Note that this applies to infinite monoids as well. 

\begin{cor}\label{cor:monoidsnotgroups}
Let $M$ be a monoid such that $M$ is not a group.  Then $\sigma_s(M)=2$.
\end{cor}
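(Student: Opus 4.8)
The plan is to read this off from Theorem~\ref{thm:monoids} via a short case analysis. Since $M$ is a monoid, exactly one of the three cases of Theorem~\ref{thm:monoids} applies, and the hypothesis that $M$ is not a group excludes case (i). If case (ii) holds, i.e.\ $S = M - \{1\}$ is a non-empty semigroup, then Theorem~\ref{thm:monoids}(ii) already asserts $\sigma_s(M) = 2$; and if case (iii) holds, then Theorem~\ref{thm:monoids}(iii) likewise gives $\sigma_s(M) = 2$. Hence $\sigma_s(M) = 2$ in every case compatible with the hypothesis, and there is nothing further to check.

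Alternatively, one can bypass the full theorem and argue directly. Since $M$ is not a group, the contrapositive of Lemma~\ref{lem:monoidsaregroups} gives $M - R_1 \neq \emptyset$. The two computations in the proof of Lemma~\ref{lem:monoidnonemptyeverything} --- that $R_1$ is a submonoid of $M$, and that $M - R_1$ is a subsemigroup of $M$ --- do not invoke the extra hypothesis $R_1 \neq \{1\}$, so they apply here verbatim. Both $R_1$ and $M - R_1$ are proper subsemigroups ($R_1 \neq M$ because $M - R_1 \neq \emptyset$, and $M - R_1 \neq M$ because $1 \in R_1$), and $M = R_1 \cup (M - R_1)$. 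This exhibits a covering of $M$ by two proper subsemigroups.

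No genuine lower bound is needed: a single proper subsemigroup cannot exhaust $M$, so $\sigma_s(M) \geq 2$ automatically for any monoid, and the whole content lies in the upper bound supplied above. There is no real obstacle --- the statement is recorded separately mainly to highlight that, unlike Theorem~\ref{thm:finitesemigroups}, it requires no finiteness hypothesis on $M$.
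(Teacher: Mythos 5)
Your first paragraph is exactly how the paper treats this statement: the corollary is recorded immediately after Theorem~\ref{thm:monoids} with no separate proof, since the hypothesis excludes case (i) and cases (ii) and (iii) each assert $\sigma_s(M)=2$ directly. Your alternative direct argument is also correct and is worth noting: the closure computations for $R_1$ and $M-R_1$ in the proof of Lemma~\ref{lem:monoidnonemptyeverything} indeed never use $R_1\neq\{1\}$, and the contrapositive of Lemma~\ref{lem:monoidsaregroups} guarantees $M-R_1\neq\emptyset$, so $M=R_1\cup(M-R_1)$ is always a covering by two proper subsemigroups when $M$ is not a group. This unifies the paper's cases (ii) and (iii) (where the paper uses the decompositions $R_1\cup(M-R_1)$ and $\{1\}\cup(M-\{1\})$ separately, the latter coinciding with yours since there $R_1=\{1\}$), at the cost of not recovering the finer information about $\sigma_m$ and $\sigma_m^*$ that the theorem's case split provides. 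Your remark on the lower bound is right: properness alone rules out a covering by one subsemigroup.
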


Combining Theorem~\ref{thm:finitesemigroups} and Theorem~\ref{thm:monoids} allows us to give the following more precise characterization of covering numbers of finite monoids.

\begin{cor}
Let $M$ be a finite monoid.
\begin{enumerate}[(i)]
\item If $M$ is a group, then $\sigma_m(M)=\sigma_m^*(M)=\sigma_s(M)=\sigma_g(M)$.
\item If $S=M-\{1\}$ is a group, then $\sigma_m^*(M)=2=\sigma_s(M)$ and $\sigma_m(M)=\sigma_g(S)$.
\item If $S=M-\{1\}$ is a monogenic semigroup that is not a group, then $\sigma_s(M)=2$ and $\sigma_m^*(M)=\sigma_m(M)=\infty$.
\item Otherwise, $\sigma_m(M)=\sigma_m^*(M)=\sigma_s(M)=2$
\end{enumerate}
\end{cor}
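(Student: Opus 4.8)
The plan is to read the corollary off Theorem~\ref{thm:monoids}, using the finiteness of $M$ (hence of $S:=M-\{1\}$ whenever it is a semigroup) to replace the quantity $\sigma_s(S)$ appearing there by an explicit value, via Theorem~\ref{thm:finitesemigroups} and Corollary~\ref{cor:finitegroups}. I would organize the argument along the three cases of Theorem~\ref{thm:monoids}, noting that its middle case subdivides according to which case of Theorem~\ref{thm:finitesemigroups} the finite semigroup $S$ satisfies.

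First, if $M$ is a group, Theorem~\ref{thm:monoids}(i) gives $\sigma_m(M)=\sigma_m^*(M)=\sigma_s(M)$, and Corollary~\ref{cor:finitegroups} rewrites $\sigma_s(M)$ as $\sigma_g(M)$, which is (i). If $M$ is not a group and $S=M-\{1\}$ is not a non-empty semigroup, then Theorem~\ref{thm:monoids}(iii) gives $\sigma_m(M)=\sigma_m^*(M)=\sigma_s(M)=2$, one subcase of (iv). In the remaining situation $S$ is a non-empty finite semigroup, so Theorem~\ref{thm:monoids}(ii) gives $\sigma_s(M)=2$ and $\sigma_m^*(M)\le\sigma_m(M)=\sigma_s(S)$, while Lemma~\ref{lem:order_of_coverings} gives $2=\sigma_s(M)\le\sigma_m^*(M)$. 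Now I would apply Theorem~\ref{thm:finitesemigroups} to $S$: if $S$ is a group then $\sigma_s(S)=\sigma_g(S)$, so $\sigma_m(M)=\sigma_g(S)$ (case (ii)); if $S$ is neither monogenic nor a group then $\sigma_s(S)=2$, so $\sigma_m(M)=2$ and, being caught between $2$ and $2$, $\sigma_m^*(M)=2$ as well (the last subcase of (iv)); and if $S$ is monogenic but not a group then $\sigma_s(S)=\infty$, so $\sigma_m(M)=\infty$.

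Two values of $\sigma_m^*$ are not pinned down by Theorem~\ref{thm:monoids}, and supplying them is the only real content. In case (ii) I must improve $\sigma_m^*(M)\le\sigma_m(M)$ to $\sigma_m^*(M)=2$: here $S$, being a group, is a monoid and hence a proper monoidal subsemigroup of $M$, and $\{1\}$ is another (trivial) proper monoidal subsemigroup, so $M=S\cup\{1\}$ gives $\sigma_m^*(M)\le 2$, and with $\sigma_m^*(M)\ge\sigma_s(M)=2$ we get equality. In case (iii) I must show $\sigma_m^*(M)=\infty$, which is not a consequence of $\sigma_m(M)=\infty$ and Lemma~\ref{lem:order_of_coverings}. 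Here is the argument I would give: write $S=\langle a\rangle$; any monoidal subsemigroup $T$ of $M$ containing $a$ contains $\langle a\rangle=S$, hence $T=S$ or $T=M$; but $S$ cannot be a monoid, for if $e$ were an identity of $S$ then $e=a^t$ for some $t\ge 1$ (as $e\in S=\{a,a^2,\dots\}$), and then either $t=1$ and $S=\{e\}$, or $t\ge 2$ and $a^{t-1}\in S$ is a two-sided inverse of $a$, making every power of $a$ a unit; in either case $S$ is a group, contrary to hypothesis. Thus $T=M$, i.e. $M$ is the only monoidal subsemigroup of $M$ containing $a$, so no finite family of proper monoidal subsemigroups covers $M$ and $\sigma_m^*(M)=\infty$. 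This finishes case (iii).

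I expect the lower bound $\sigma_m^*(M)=\infty$ in case (iii) to be the main obstacle, since none of the cited results bounds $\sigma_m^*$ from below and one must argue directly that the generator of $S$ lies in no proper monoidal subsemigroup of $M$. The rest is routine: matching the three cases of Theorem~\ref{thm:monoids} against the three cases of Theorem~\ref{thm:finitesemigroups} and invoking Corollary~\ref{cor:finitegroups} and Lemma~\ref{lem:order_of_coverings} to extract the numerical values.
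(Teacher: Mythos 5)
Your proposal is correct, and it is in fact more complete than what the paper offers: the paper states this corollary with no proof beyond the remark that it follows by ``combining'' Theorem~\ref{thm:finitesemigroups} and Theorem~\ref{thm:monoids}, and you are right that this combination alone does not pin down $\sigma_m^*(M)$ in cases (ii) and (iii), since Theorem~\ref{thm:monoids}(ii) only yields $2\leq\sigma_m^*(M)\leq\sigma_m(M)$ there. The two supplementary arguments you give are sound: in case (ii) the observation that a group $S=M-\{1\}$ is itself a proper monoidal subsemigroup, so $M=S\cup\{1\}$ forces $\sigma_m^*(M)=2$; and in case (iii) the direct verification that the generator $a$ of $S$ lies in no proper monoidal subsemigroup, because any monoidal subsemigroup containing $a$ contains all of $S$, and $S$ cannot be a monoid without being a group (your dichotomy on the exponent $t$ with $a^t=e$ is correct). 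The paper only recovers this information later, in the proposition characterizing when $\sigma_m^*(M)<\sigma_m(M)$ (whose proof shows that strict inequality forces $M-\{1\}$ to be a group, which handles case (iii) by contraposition, and that $\sigma_m^*(M)=2$ when it occurs, which handles case (ii)); so your route is a self-contained, earlier substitute for that proposition, while the paper's ordering defers the $\sigma_m^*$ analysis. The remaining bookkeeping --- matching the cases of Theorem~\ref{thm:monoids} against those of Theorem~\ref{thm:finitesemigroups} and invoking Corollary~\ref{cor:finitegroups} and Lemma~\ref{lem:order_of_coverings} --- is exactly what the paper intends, and you have carried it out correctly, including the subcase split of (iv).
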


To conclude this section, we observe that in one particular case, the covering number of a monoid with repsect to submonoids and monoidal subsemigroups may differ.  We give a complete characterization of when this case occurs.

\begin{pro}
We have $\sigma_m^*(M)<\sigma_m(M)$ if and only if $M-\{1\}$ is a group and $\sigma_s(M-\{1\})>2$.  Also, $\sigma_m^*(M)<\sigma_m(M)$ implies $\sigma_m^*(M)=2$
\end{pro}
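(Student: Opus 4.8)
The plan is to reduce everything to the three cases of Theorem~\ref{thm:monoids}. In cases (i) and (iii) one already has $\sigma_m(M)=\sigma_m^*(M)$, so strict inequality can occur only in case (ii), where $S:=M-\{1\}$ is a non-empty semigroup, $\sigma_m(M)=\sigma_s(S)$, and $\sigma_s(M)=2\le\sigma_m^*(M)$ by Lemma~\ref{lem:order_of_coverings}. So I would argue inside case (ii) throughout, proving the biconditional there.

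First I would handle the reverse implication. Suppose $S$ is a group with $\sigma_s(S)>2$. Then $S$ is a non-empty semigroup and $M$ is not a group (otherwise $M-\{1\}$ would not be closed), so we are in case (ii) and $\sigma_m(M)=\sigma_s(S)>2$. Now $M=S\cup\{1\}$ exhibits $M$ as the union of two proper monoidal subsemigroups: $S$ is a group, hence a monoid, hence a monoidal subsemigroup of $M$, and $S\neq M$; and $\{1\}$ is a monoidal subsemigroup, proper since $S\neq\emptyset$ forces $M\neq\{1\}$. Thus $\sigma_m^*(M)\le 2$, and with $\sigma_m^*(M)\ge 2$ we get $\sigma_m^*(M)=2<\sigma_m(M)$.

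Next I would prove the forward implication. Assume $\sigma_m^*(M)<\sigma_m(M)$; as noted this puts us in case (ii), so $\sigma_m(M)=\sigma_s(S)$, and $\sigma_m^*(M)\ge 2$ forces $\sigma_s(S)\ge 3$, i.e.\ $\sigma_s(S)>2$. It remains to show $S$ is a group. The key step is to show $S$ must be a monoid: take any covering $\{U_1,\dots,U_k\}$ of $M$ by proper monoidal subsemigroups and set $V_i:=U_i\cap S=U_i\setminus\{1\}$. Each $V_i$ is a subsemigroup of $S$ (a product of elements of $S$ lies in $S$) and $\bigcup_i V_i=S$. If $S$ had no identity, then each non-empty $V_i$ would be a \emph{proper} subsemigroup of $S$: if $V_i=S$ then $U_i\supseteq S$, and either $1\in U_i$, forcing $U_i=M$ (not proper), or $1\notin U_i$, forcing $U_i=S$ to be a monoid (contradiction). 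So the non-empty $V_i$ cover $S$ by proper subsemigroups, whence $\sigma_s(S)\le k$; taking $k=\sigma_m^*(M)$ gives $\sigma_s(S)\le\sigma_m^*(M)\le\sigma_m(M)=\sigma_s(S)$, contradicting strict inequality. Hence $S$ is a monoid, and if $S$ were a monoid that is not a group, Corollary~\ref{cor:monoidsnotgroups} would give $\sigma_s(S)=2$, contradicting $\sigma_s(S)>2$; therefore $S$ is a group. For the final assertion: any $M$ with $\sigma_m^*(M)<\sigma_m(M)$ has $S$ a group with $\sigma_s(S)>2$ by what was just shown, and then the construction $M=S\cup\{1\}$ from the reverse direction gives $\sigma_m^*(M)=2$.

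I expect the only genuine obstacle to be the key step in the forward implication — verifying that the traces $U_i\cap S$ of a monoidal covering are \emph{proper} subsemigroups of $S$, which is precisely where the hypothesis ``$S$ is not a monoid'' is consumed. Everything else is routine assembly from Theorem~\ref{thm:monoids}, Lemma~\ref{lem:order_of_coverings}, and Corollary~\ref{cor:monoidsnotgroups}.
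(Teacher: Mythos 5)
Your argument is correct and follows essentially the same route as the paper: both directions hinge on transferring a minimal monoidal covering so that the strict inequality $\sigma_m^*(M)<\sigma_m(M)$ forces $M-\{1\}$ to be a monoid, after which Corollary~\ref{cor:monoidsnotgroups} upgrades this to ``group,'' and the reverse direction is the explicit two-piece cover $M=(M-\{1\})\cup\{1\}$. The only cosmetic difference is that you intersect the covering pieces with $S=M-\{1\}$ to get a semigroup covering of $S$, whereas the paper adjoins $1$ to each piece to get a submonoid covering of $M$; the two manipulations are equivalent.
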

\begin{proof}
Suppose that $\sigma_m^*(M)<\sigma_m(M)$. By Lemmas~\ref{lem:monoidsaregroups}, \ref{lem:monoidnonemptyeverything}, and \ref{lem:semigroupwithidentity}, we see that we must have $M-R_1\neq\emptyset$ and $R_1=\{1\}$.  Also suppose that $\sigma_m^*(M)=n$ for some $n\in\mathbb{N}$.   Then there exists a set $\{M_1,\ldots,M_n\}$ of proper monoidal subsemigroups of $M$ such that $\bigcup M_i=M$.  Consider the set $\{M_1\cup\{1\},\ldots,M_n\cup\{1\}\}$ of subsets of $M$.   It is clear that $M_i\cup\{1\}$ is a submonoid of $M$ for each $i$, and $\bigcup(M_i\cup\{1\})=M$.  However, since $\sigma_m^*(M)<\sigma_m(M)$, there must exist an $i$ such that $M_i\cup \{1\}$ is not proper in $M$, i.e. $M_i=M-\{1\}$.  This shows that $M-\{1\}$ is a monoidal subsemigroup of $M$.  We now see that $\sigma_m^*(M)=2$, since $M=(M-\{1\})\cup\{1\}$.  We conclude $\sigma_s(M-\{1\})=\sigma_m(M)>2$ and that $M-\{1\}$ is a group, using Corollary~\ref{cor:monoidsnotgroups}.

The reverse direction is clear, because $\sigma_m^*(M)=2$ and $\sigma_m(M)>2$.
\end{proof}

As a remark, for finite monoids $M$, we have $\sigma_m^*(M)<\sigma_m(M)$ if and only if $M-\{1\}$ is a non-monogenic group.  However, some infinite groups have covering number with respect to semigroups equal to two, such as $\mathbb{Z}$ under addition, so the requirement that $\sigma_s(M-\{1\})>2$ in the previous lemma is necessary, even when $M-\{1\}$ is a group.


\section{Open Questions}\label{sec:questions}
Although we have given a complete characterization of covering numbers of finite semigroups and finite inverse subsemigroups, the infinite case is largely unsolved.  Some methods in this paper can be extended to the infinite case, with obvious complications.  For instance, an infinite semigroup needs not have a maximal $\cJ$-class, preventing the use of Lemma~\ref{lem:SwithoutJ}.  Also, infinite simple and 0-simple semigroups may not be completely simple or completely 0-simple, so Rees's Theorem has limited usefulness.\\
\\
\noindent\textbf{Question 1}\quad\emph{What is $\sigma_s(S)$ for an infinite semigroup $S$?}
\\
\\
\noindent\textbf{Question 2}\quad\emph{What is $\sigma_i(I)$ for an infinite inverse semigroup $I$?}
\\

The covering number of groups with respect to semigroups is addressed in \cite{Donoven}, where the first author characterizes which groups have semigroup covering number equal to two (such as $\mathbb{Z}$).  Specifically, it is shown that for a group $G$, we have $\sigma_s(G)=2$ if and only if $G$ has a non-trivial left-orderable quotient.  Since covering numbers with respect to semigroups and groups are equivalent for finite groups, for every $n$ that is a group covering number, there exists a semigroup $S$ such that $\sigma_s(S)=n$.  However, not every integer is a group covering number, for instance 7 and 11, as mentioned in the introduction and discussed in \cite{GaronziKappeSwartz18}. This leads to the following question.
\\
\\
\noindent\textbf{Question 3}\quad\emph{Does there exist a semigroup $S$ such that $\sigma_s(S)=7$ or any other integer greater than two that is not a group covering number?}

\section*{Acknowledgements}

The authors would like to thank Arturo Magidin and our colleagues at Binghamton University for their helpful advice and feedback, especially with the proof of Proposition~\ref{pro:minimumindexsubgroup}.

\bibliography{references_mastercopy}

\begin{thebibliography}{10}

\bibitem{Cohn94}
J.~H.~E. Cohn.
\newblock On n-sum groups.
\newblock {\em Mathematica Scandinavica}, 75(1):44--58, 1994.

\bibitem{Donoven}
Casey Donoven.
\newblock Groups that are the union of two semigroups have left-orderable
  quotients.
\newblock {\em in preparation}, 2020.

\bibitem{DonovenEppolito}
Casey Donoven and Chris Eppolito.
\newblock Notes on semigroup covering numbers and limits of monogenic
  semigroups.
\newblock {\em in preparation}, 2020.

\bibitem{GagolaKappe16}
Stephen~M. Gagola~III and Luise-Charlotte Kappe.
\newblock On the covering number of loops.
\newblock {\em Expositiones Mathematicae}, 34(4):436 -- 447, 2016.

\bibitem{GaronziKappeSwartz18}
Martino Garonzi, Luise-Charlotte Kappe, and Eric Swartz.
\newblock On integers that are covering numbers of groups.
\newblock {\em Experimental Mathematics}, 0(0):1--19, 2019.

\bibitem{GrahamGrahamRhodes68}
N.~Graham, R.~Graham, and J.~Rhodes.
\newblock Maximal subsemigroups of finite semigroups.
\newblock {\em Journal of Combinatorial Theory}, 4(3):203 -- 209, 1968.

\bibitem{howie95fundamentals}
J.~M. Howie.
\newblock {\em Fundamentals of Semigroup Theory}.
\newblock LMS monographs. Clarendon Press, 1995.

\bibitem{Lucchini12}
Andrea Lucchini and Attila Mar{\'o}ti.
\newblock Rings as the unions of proper subrings.
\newblock {\em Algebras and Representation Theory}, 15(6):1035--1047, Dec 2012.

\bibitem{Rees40}
D.~Rees.
\newblock On semi-groups.
\newblock {\em Mathematical Proceedings of the Cambridge Philosophical
  Society}, 36(4):387–400, 1940.

\bibitem{SatohYamaTokizawa94}
S.~Satoh, K.~Yama, and M.~Tokizawa.
\newblock Semigroups of order 8.
\newblock {\em Semigroup Forum}, 49(1):7--29, Dec 1994.

\bibitem{Scorza26}
G.~Scorza.
\newblock I gruppi che possone pensarsi come somma di tre lori sottogruppi.
\newblock {\em Bollettino dell'Unione Matematica Italiana}, 5:216--218, 1926.

\bibitem{Tomkinson97}
M.~J. Tomkinson.
\newblock Groups as the union of proper subgroups.
\newblock {\em Mathematica Scandinavica}, 81:191--198, Dec. 1997.

\bibitem{UnionsOfGroups}
R.~J. Warne.
\newblock On the structure of semigroups which are unions of groups.
\newblock {\em Transactions of the American Mathematical Society},
  186:385--401, 1973.

\bibitem{Werner15}
Nicholas~J. Werner.
\newblock Covering numbers of finite rings.
\newblock {\em The American Mathematical Monthly}, 122(6):552--566, 2015.

\end{thebibliography}

\end{document}